\pdfoutput=1 %
\documentclass[reqno]{amsart} %

\usepackage[utf8]{inputenc}

\usepackage{braket}
\usepackage{amsmath}
\usepackage{amstext}
\usepackage{amssymb}
\usepackage{amsthm}
\usepackage{mathtools}
\usepackage{stmaryrd}
\usepackage{mathrsfs}
\usepackage{extarrows}
\usepackage{afterpage}
\usepackage{tabu}
\usepackage{array}
\usepackage{microtype}
\usepackage{graphicx}
\usepackage{amsfonts}
\usepackage{units}
\usepackage[shortlabels]{enumitem}
\usepackage{url}
\usepackage{bm}
\usepackage{soul}
\usepackage{tikz}
\usetikzlibrary{matrix,arrows,calc,fit,cd,positioning,intersections,arrows.meta,backgrounds}
\usepackage{tikz-3dplot}

\usepackage{cleveref}
\usepackage[margin=1cm]{caption}
\usepackage[margin=0.5cm]{subcaption}
\usepackage{colortbl}

\usepackage{fullpage}

\theoremstyle{definition}
\newtheorem{theorem}{Theorem}[section]

\newtheorem{lemma}[theorem]{Lemma}

\newtheorem{conjecture}[theorem]{Conjecture}
\newtheorem{question}[theorem]{Question}

\newcommand{\N}{\mathbb{N}}
\newcommand{\Z}{\mathbb{Z}}
\newcommand{\R}{\mathbb{R}}
\newcommand{\C}{\mathbb{C}}

\renewcommand{\H}{\mathbb{H}}

\renewcommand{\S}{\mathbb{S}}

\newcommand{\A}{\mathcal{A}}

\DeclareMathOperator{\id}{id}

\DeclareMathOperator{\im}{im}

\newcommand{\<}{\langle}
\renewcommand{\>}{\rangle}

\DeclareMathOperator{\Mov}{\textsc{Mov}}

\DeclareMathOperator{\Isom}{\textsc{Isom}}

\title{The dual approach to the $K(\pi, 1)$ conjecture}
\author{Giovanni Paolini}

\begin{document}

\begin{abstract}
	Dual presentations of Coxeter groups have recently led to breakthroughs in our understanding of affine Artin groups. In particular, they led to the proof of the $K(\pi, 1)$ conjecture and to the solution of the word problem.
	Will the ``dual approach'' extend to more general classes of Coxeter and Artin groups?
	In this paper, we describe the techniques used to prove the $K(\pi, 1)$ conjecture for affine Artin groups and we ask a series of questions that are mostly open beyond the spherical and affine cases.
\end{abstract}

\maketitle

The $K(\pi, 1)$ conjecture is one of the most important open problems on Artin groups, dating back to Brieskorn, Arnol'd, Pham, and Thom in the '60s \cite{brieskorn1973groupes,van1983homotopy}.
Due to its numerous consequences and connections, over the years it attracted the attention of mathematicians from several areas. It was solved for certain classes of Artin groups with approaches from algebraic topology, geometric group theory, and combinatorics.

The $K(\pi, 1)$ conjecture states that a certain topological space $Y$, constructed from the geometric action of a Coxeter group $W$ on the Tits cone, is a classifying space (or ``$K(\pi, 1)$'') for the corresponding Artin group $G_W$.
If $W$ is a finite or affine reflection group acting on $\R^k$, then $Y$ is the complement in $\C^n$ of the complexification of all reflection hyperplanes (see \Cref{fig:S3}, left, and \Cref{fig:triangle-groups}, center).

In this paper, we focus on a combinatorial approach which has been very fruitful when studying the class of spherical Artin groups and, more recently, affine Artin groups.
The fundamental idea, which goes back to Garside \cite{garside1969braid}, is the following: given a group $G$ (in our case, an Artin group), fix a generating set and find a special element whose divisors generate the whole group and form a lattice under the divisibility relation.
For a spherical Artin group with its standard generating set, such a special element can be obtained by lifting the longest element of the corresponding finite Coxeter group $W$.
In this case, the lattice of divisors is isomorphic to $W$ with the weak Bruhat order.
When a group admits a special element, then it is called a \emph{Garside group}.
Together, the data of a Garside group, its generating set, and its special element form a \emph{Garside structure}.
Garside structures are very useful, thanks to an elegant solution to the word problem and an explicit combinatorial construction of a classifying space.
Deligne's proof of the $K(\pi, 1)$ conjecture for spherical Artin groups \cite{deligne1972immeubles} can be reinterpreted with the language of Garside structures (although Garside groups were introduced later \cite{dehornoy1999gaussian, dehornoy2002groupes, dehornoy2015foundations}).

The aforementioned Garside structure on spherical Artin groups is arguably one of the greatest milestones in the study of Artin groups.
However, it does not generalize beyond the spherical case, since infinite Coxeter groups have no longest element.
Birman-Ko-Lee \cite{birman1998new} and Bessis \cite{bessis2003dual} introduced and studied alternative ``dual'' realizations of spherical Artin groups as Garside groups, using a different generating set (a lift of all reflections in $W$) and different special elements (Coxeter elements).
Besides providing new interesting insights into the spherical case, this dual approach has the advantage that Coxeter elements exist also in infinite Coxeter groups.
For some families of affine Artin groups, Digne showed that this larger generating set together with a Coxeter element form a dual Garside structure \cite{digne2006presentations,digne2012garside}, but McCammond proved that the divisors of a Coxeter element do not form a lattice in almost all of the remaining affine Artin groups \cite{mccammond2015dual}.
On the positive side, McCammond and Sulway \cite{mccammond2017artin} exhibited a way to embed any affine Artin group into a Garside group, thus recovering some of the benefits of a Garside structure such as a solution to the word problem.
More recently, Salvetti and the author succeeded in proving the $K(\pi, 1)$ conjecture for all affine Artin groups \cite{paolini2021proof} with an approach that is based on the dual structure even though the lattice property does not necessarily hold.

The purpose of this paper is to outline this dual approach to the $K(\pi, 1)$ conjecture.
We are going to discuss the various combinatorial, topological, and geometric ingredients that went into the proof of the affine case given in  \cite{paolini2021proof}, how they might possibly generalize to other Artin groups, and the multitude of questions that naturally arise along the way.
We do not know to what extent the dual approach is viable to obtain a full proof of the $K(\pi, 1)$ conjecture. Regardless, we want to lay the foundations for exploring new exciting directions.

\smallskip

\noindent\textbf{Acknowledgments.}
The material of this paper is based on several talks that I gave between 2019 and 2021. In alphabetical order, I am extremely grateful to Emanuele Delucchi, Jon McCammond, and Mario Salvetti for our collaborations and the profound influence they had on my work.
Without any of them, this paper (and many others) would not exist.

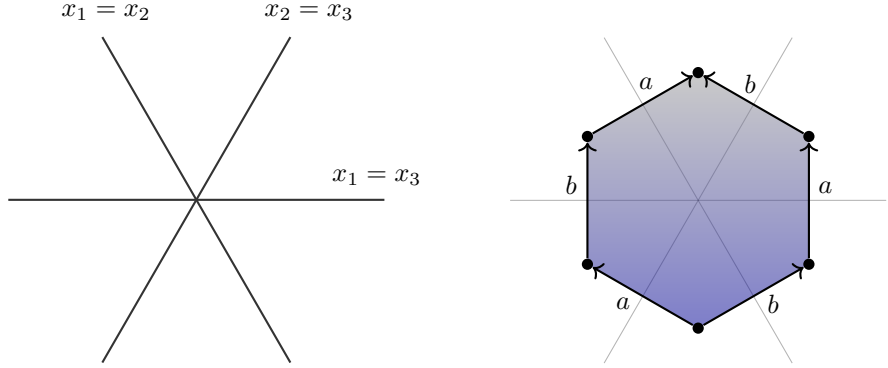
\begin{figure}[t]
    \begin{center}
	\begin{tikzpicture}[scale=1]
		\begin{scope}[thick, black!80]
		\draw (-2.5,0) -- (2.5,0);
		\draw[rotate=60] (-2.5,0) -- (2.5,0);
		\draw[rotate=120] (-2.5,0) -- (2.5,0);
		\end{scope}
		
		\node (A) at (2.4,0.3) {$x_1=x_3$};
		\node (B) at (1.5,2.5) {$x_2=x_3$};
		\node (C) at (-1.2,2.5) {$x_1=x_2$};
	\end{tikzpicture}%
	\qquad%
	\tikzstyle{salvettivertices}=[every node/.style={circle,inner sep=1.5pt,fill=black}]
	\tikzstyle{salvettiedges}=[every path/.style={->,thick}]
	\tikzstyle{XC}=[every path/.style={red, dashed, very thick}, every node/.style={red}]
	\begin{tikzpicture}[scale=1]
		\begin{scope}[every path/.style={opacity=0.3}]
		\draw (-2.5,0) -- (2.5,0);
		\draw[rotate=60] (-2.5,0) -- (2.5,0);
		\draw[rotate=120] (-2.5,0) -- (2.5,0);
		\end{scope}
		
		\begin{scope}
		\foreach \i in {0,1,...,5} {
			\node (C\i) at (\i*60+30:1.7) {};
		}
		\end{scope}

		\foreach \i in {0,1,...,5} {
			\coordinate (P\i) at (C\i) {};
		}
		\begin{scope}[every path/.style={opacity=0.3, left color=blue, right color=white}]
		\fill[shading angle=180] (P3) to (P2) to (P1) to (P0) to (P5) to (P4) to (P3);
		\end{scope}
		
		\begin{scope}[salvettivertices]
		\foreach \i in {0,1,...,5} {
			\node (D\i) at (\i*60+30:1.7) {};
		}
		\end{scope}
		
		\begin{scope}[salvettiedges]
		\draw (D3) to (D2);
		\draw (D2) to (D1);
		\draw (D0) to (D1);
		\draw (D4) to (D3);
		\draw (D4) to (D5);
		\draw (D5) to (D0);
		\end{scope}
		
		\node (a) at (6:1.7) {$a$};
		\node (b) at (66:1.7) {$b$};
		\node (c) at (114:1.7) {$a$};
		\node (d) at (173:1.7) {$b$};
		\node (e) at (234:1.7) {$a$};
		\node (f) at (306:1.7) {$b$};
	\end{tikzpicture}
	\end{center}
	\caption{The symmetric group $\mathfrak{S}_3$ on a three-element set is a spherical Coxeter group with presentation $\mathfrak{S}_3 = \< a,b \mid a^2 = b^2 = (ab)^3 = 1 \>$. It can be represented as the group of linear isometries of $\R^3$ that permutes the three coordinates. This representation can be made essential by restricting to the plane $\{x_1+x_2+x_3=0\} \subseteq \R^3$.
	\textbf{Left:} the reflection lines (corresponding to the transpositions of $\mathfrak{S}_3$) in the plane $\{x_1+x_2+x_3=0\}$.
	\textbf{Right:} the Salvetti complex $X_{\mathfrak{S}_3}$, which has one $0$-cell, two $1$-cells (labeled $a$ and $b$), and one hexagonal $2$-cell.
	The fundamental group of $X_{\mathfrak{S}_3}$ is the braid group $G_{\mathfrak{S}_3} = \< a, b \mid aba = bab \>$.
	}
	\label{fig:S3}
\end{figure}

\section{The general picture}
\label{sec:general-picture}

\subsection{Coxeter groups}
\label{sec:coxeter-groups}
A \emph{Coxeter group} is a group with a presentation of the following form:
\begin{equation}
	\label{eq:coxeter-presentation}
 	W = \< S \mid (st)^{m(s,t)} = 1 \;\; \forall\, s,t \in S \text{ such that $m(s,t) \neq \infty$} \>,
\end{equation}
where $S$ is a finite set and $m\colon S \times S \to \N$ is any symmetric integer matrix with $1$'s on the diagonal and all other entries $\geq 2$.
The off-diagonal entries are allowed to take the value $m(s, t) = \infty$, in which case there is no relation involving $s$ and $t$.
The $1$'s on the diagonal ensure that all generators have order $2$.
In this section, we recall some facts about Coxeter groups that are particularly relevant for the rest of this paper. We point to \cite{bourbaki1968elements, humphreys1992reflection} for a detailed introduction.

The presentation \eqref{eq:coxeter-presentation} can be encoded into a graph called the \emph{Coxeter graph}: the vertices are indexed by $S$, and there is an edge connecting $s$ and $t$ whenever $m(s,t) \geq 3$; this edge is labeled by $m(s,t)$ if $m(s,t) \geq 4$.
We always assume that $W$ is \emph{irreducible}, i.e., its Coxeter graph is connected.
The size of the generating set $S$ is called the \emph{rank} of $W$.
Any subset $T\subseteq S$ generates a subgroup of $W$ called a \emph{standard parabolic subgroup}, which is itself a Coxeter group, with a presentation obtained by restricting \eqref{eq:coxeter-presentation} to the generators in $T$ and the relations between them.
Any conjugate of $S$ can be used in place of $S$ to write a presentation for $W$ of the form \eqref{eq:coxeter-presentation}.
The conjugates of $S$ are called \emph{sets of simple reflections}.
Usually, a set $S$ of simple reflections is fixed, in which case the elements of $S$ are called \emph{simple reflections}. However, when taking the dual point of view (\Cref{sec:dual-approach}), it is useful to think of a Coxeter group without any preferred set of simple reflections.

For us, a Coxeter group $W$ always carries with it a fixed set $S$ of simple reflections (the pair $(W, S)$ is usually called a \emph{Coxeter system}) or the set of all sets of simple reflections (when the particular choice of $S$ is not relevant).
Indeed, two Coxeter groups may be isomorphic as groups (if we forget about simple reflections) but not as Coxeter groups.
For example, the dihedral group $W = \< s, t \mid s^2 = t^2 = (st)^6 = 1\>$ is irreducible when using $s,t$ as simple reflections, but it is the direct product of two Coxeter subgroups: the symmetric group $\mathfrak{S}_3$ (generated by $s$ and $tst$), and $\Z_2$ (generated by $(st)^3$). See for example \cite{muhlherr2006isomorphism}.

The family of Coxeter groups encompasses all discrete groups generated by (linear or affine) Euclidean reflections in $\R^k$ (we call them \emph{(real) reflection groups}).
The linear reflection groups are exactly the finite Coxeter groups, and they are also called \emph{spherical} because they act on the unit sphere $\S^{k-1} \subseteq \R^k$.
The action on $\S^{k-1}$ is cocompact, provided that $k=|S|$ (the representation is \emph{essential}).
For example, the symmetric groups are spherical Coxeter groups (\Cref{fig:S3}).
The infinite Euclidean reflection groups are called \emph{affine} Coxeter groups.
They act cocompactly on $\R^k$, provided that $k=|S|-1$.
See \Cref{fig:triangle-groups} for more examples.

\begin{figure}
    \makeatletter
    \define@key{z sphericalkeys}{radius}{\def\myradius{#1}}
    \define@key{z sphericalkeys}{theta}{\def\mytheta{#1}}
    \define@key{z sphericalkeys}{phi}{\def\myphi{#1}}
    \tikzdeclarecoordinatesystem{z spherical}{%
        \setkeys{z sphericalkeys}{#1}%
        \pgfpointxyz{\myradius*sin(\mytheta)*cos(\myphi)}{\myradius*sin(\mytheta)*sin(\myphi)}{\myradius*cos(\mytheta)}}
    \makeatother
    
    \begin{tikzpicture}[scale=2.5]
        \tdplotsetmaincoords{10}{20}
        \begin{scope}[tdplot_main_coords]

            \tdplotsetrotatedcoords{0}{90}{0}
            \begin{scope}[tdplot_rotated_coords]
                \newcommand{\beginphi}{96}
                \draw plot[variable=\x,domain=\beginphi:\beginphi+180,smooth,samples=60]  (z spherical cs:radius=1,theta=90,phi=\x);
            \end{scope}
            
            \tdplotsetrotatedcoords{90}{90}{0}
            \begin{scope}[tdplot_rotated_coords]
                \newcommand{\beginphi}{95}
                \draw plot[variable=\x,domain=\beginphi:\beginphi+180,smooth,samples=60]  (z spherical cs:radius=1,theta=90,phi=\x);
            \end{scope}
            
            \tdplotsetrotatedcoords{45}{54.7356103}{0}
            \begin{scope}[tdplot_rotated_coords]
                \newcommand{\beginphi}{100}
                \draw plot[variable=\x,domain=\beginphi:\beginphi+180,smooth,samples=60]  (z spherical cs:radius=1,theta=90,phi=\x);
            \end{scope}
            
            \tdplotsetrotatedcoords{-45}{54.7356103}{0}
            \begin{scope}[tdplot_rotated_coords]
                \newcommand{\beginphi}{100}
                \draw plot[variable=\x,domain=\beginphi:\beginphi+180,smooth,samples=60]  (z spherical cs:radius=1,theta=90,phi=\x);
            \end{scope}
            
            \tdplotsetrotatedcoords{45}{125.2643897}{0}
            \begin{scope}[tdplot_rotated_coords]
                \newcommand{\beginphi}{100}
                \draw plot[variable=\x,domain=\beginphi:\beginphi+180,smooth,samples=60]  (z spherical cs:radius=1,theta=90,phi=\x);
            \end{scope}
    
            \tdplotsetrotatedcoords{-45}{125.2643897}{0}
            \begin{scope}[tdplot_rotated_coords]
                \newcommand{\beginphi}{95}
                \draw plot[variable=\x,domain=\beginphi:\beginphi+180,smooth,samples=60]  (z spherical cs:radius=1,theta=90,phi=\x);
            \end{scope}
    
            \tdplotsetrotatedcoords{45}{90}{0}
            \begin{scope}[tdplot_rotated_coords]
                \newcommand{\beginphi}{95}
                \draw[dashed, orange] plot[variable=\x,domain=\beginphi:\beginphi+180,smooth,samples=60]  (z spherical cs:radius=1,theta=90,phi=\x);
            \end{scope}
        \end{scope}
        \draw[black!50, thick] (0,0,0) circle (1);
        
        \node at (0.35,0.11) {\small $a$};
        \node at (-0.2,-0.16) {\small $b$};
        \node at (0.37,-0.39) {\small $c$};
    \end{tikzpicture}
    \quad
    \begin{tikzpicture}[scale=1.5,
	extended line/.style={shorten >=-#1,shorten <=-#1}, extended line/.default=35cm]
	
	\clip (-1.23, -2.2) rectangle (2.09, 1.2);
	
	\begin{scope}[black!80]
	\newcommand{\rows}{5}
	\foreach \row in {-\rows, ...,\rows} {
		\draw [extended line] ($\row*({0.5*sqrt(3)}, 0.5)$) -- ($(0,\rows)+\row*({0.5*sqrt(3)}, -0.5)$);
		\draw [extended line] ($\row*(0, 1)$) -- ($({\rows/2*sqrt(3)}, \rows/2)+\row*({-0.5*sqrt(3)}, 0.5)$);
		\draw [extended line] ($\row*(0, 1)$) -- ($({-\rows/2*sqrt(3)}, \rows/2)+\row*({0.5*sqrt(3)}, 0.5)$);
	}
	\end{scope}
	
	\draw [extended line, dashed, orange] ($({0.25*sqrt(3)}, 0)$) -- ($({0.25*sqrt(3)}, 1)$);

        \node at (0.57,-0.2) {\small $a$};
        \node at (-0.1,-0.5) {\small $b$};
        \node at (0.57,-0.8) {\small $c$};
	\end{tikzpicture}
	\quad
	\input{hyperbolic-group}

    \caption{Reflection arrangements of some Coxeter groups of rank $3$ (also known as \emph{triangle groups}).
    The dashed line is the axis of the Coxeter element $w = abc$ (see \Cref{sec:coxeter-elements}).
    \textbf{Left:} the spherical $(2, 3, 3)$ triangle group, a.k.a.\ the symmetric group $\mathfrak{S}_4$.
    \textbf{Center:} the affine $(3, 3, 3)$ triangle group, a.k.a.\ the affine symmetric group of type $\tilde A_3$.
    \textbf{Right:} the hyperbolic $(4, 3, 3)$ triangle group.
    In all three cases, the triple $(p, q, r)$ consists of the upper-triangular entries of the Coxeter matrix.
    The sphere $\S^2$ (left), Euclidean plane $\R^2$ (center), and the hyperbolic plane $\H^2$ (right) are tiled by triangles with angles $\frac{\pi}{p}, \frac{\pi}{q}, \frac{\pi}{r}$.}
    \label{fig:triangle-groups}
\end{figure}

The representations of finite and affine Coxeter groups as Euclidean reflection groups inspired a more general \emph{geometric representation} for an arbitrary Coxeter group $W$ as a group generated by linear reflections with respect to a suitable bilinear form $B$ in $\R^n$ with $n=|S|$ \cite[Section 5.3]{humphreys1992reflection}.
The bilinear form is defined using the data of the presentation \eqref{eq:coxeter-presentation}. It is positive definite if $W$ is finite, positive semi-definite if $W$ is affine, and it otherwise admits both positive and negative vectors.

By switching to the contragradient representation \cite[Section 5.13]{humphreys1992reflection}, one gains a geometric picture that resembles the Euclidean case: $W$ is generated by reflections with respect to hyperplanes of $\R^n$, these hyperplanes divide $\R^n$ into simplicial cones called \emph{chambers}, and the generating set $S$ consists of the reflections with respect to the walls of a \emph{fundamental chamber} $C_0 \subseteq \R^n$.
The union of all $W$-translates $w(\bar{C}_0)$ forms a convex cone $I$ called the \emph{Tits cone}.
The closure $\bar C_0$ of the fundamental chamber is a fundamental domain for the action of $W$ on the Tits cone $I$.
In addition, $W$ acts simply transitively on the chambers contained in the Tits cone $I$, so the choice of a fundamental chamber induces a bijection between the elements of $W$ and the chambers in $I$.
The Tits cone is the whole space $\R^n$ if $W$ is finite, an open half-space (with the origin added) if $W$ is affine, and otherwise it does not contain any line.

The Coxeter groups that are neither finite nor affine are coarsely classified as:
\emph{Lorentzian}, if the bilinear form $B$ has exactly one negative eigenvalue; \emph{higher-rank},\footnote{Here \emph{rank} does not refer to the rank of the Coxeter group as defined earlier. Rather, it refers to the rank of the Lie group $SO(p, q)$ where $(p, q)$ is the signature of the bilinear form $B$.} if $B$ has at least two negative eigenvalues.
A Lorentzian Coxeter group is called \emph{hyperbolic} if (i) the bilinear form $B$ is non-degenerate and (ii) every vector in the interior of the Tits cone is negative.
Hyperbolic Coxeter groups act by isometries on the hyperbolic space $\H^{n-1}$ (realized as the hyperboloid model inside the Tits cone) and the chambers are simplicial (possibly with ideal vertices).
Higher-rank Coxeter groups act by isometries on the projectivization of the Tits cone with the Hilbert metric \cite{mcmullen2002coxeter}.

\subsection{The $K(\pi, 1)$ conjecture and Artin groups}
\label{sec:conjecture}
Let $\A$ be the set of all fixed hyperplanes of reflections of $W$, where $W$ acts on the Tits cone $I \subseteq\R^n$ via the representation introduced in the previous section.

\begin{conjecture}[$K(\pi, 1)$ conjecture]
    The space
    \[ Y = (I\times I) \setminus \bigcup_{H \in \A} (H \times H) \]
    is a $K(\pi, 1)$ space.
\end{conjecture}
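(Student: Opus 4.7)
The plan is to attack the conjecture through the dual approach outlined in the introduction. Since $W$ acts freely on $Y$ (all reflection hyperplanes have been removed), the statement for $Y$ is equivalent to the corresponding one for the orbit space $Y_W = Y/W$, whose fundamental group is the Artin group $G_W$. I would therefore aim to show that $Y_W$ is aspherical, or equivalently that its universal cover is contractible.

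As a preliminary step, I would replace $Y_W$ with a finite-dimensional combinatorial model. Following Salvetti, one constructs a regular cell complex $X_W$ that sits inside $Y$ as a $W$-equivariant deformation retract (shown in the spherical case, and expected in general from the local geometry of chambers in the Tits cone $I$), so that $X_W/W$ is homotopy equivalent to $Y_W$. This reduces the problem to showing that $X_W/W$ is aspherical.

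Now comes the dual move. Fix a Coxeter element $w \in W$ and consider the interval $[1, w]$ in the absolute order, where $u \leq v$ iff the reflection length of $v$ decomposes as $\ell_R(u) + \ell_R(u^{-1}v)$. From the factorizations of $w$ into reflections, build the \emph{interval complex} $K_w$ whose cells encode such factorizations modulo Hurwitz moves. Following the strategy of Paolini--Salvetti for the affine case, I would construct an explicit cellular map $K_w \to X_W/W$ and verify, via a discrete Morse matching or a nerve-type argument, that it is a homotopy equivalence. This step is largely formal once the combinatorics of $[1, w]$ under the Hurwitz action is understood, and it does not require $[1, w]$ to be a lattice.

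The final and hardest step is to prove that $K_w$ is aspherical. In the spherical case, $[1, w]$ is a lattice and $K_w$ acquires a Garside structure, which directly forces asphericity. When the lattice property fails (as in almost all non-spherical cases, by McCammond) one must work harder: in affine type, Paolini--Salvetti proceed by producing a combinatorial decomposition of $[1, w]$ along parabolic subintervals, constructing a discrete Morse function on the universal cover $\widetilde{K_w}$, and then checking that the resulting collapsed complex has no nontrivial homotopy. The \textbf{main obstacle} in extending this program to arbitrary Coxeter groups lies precisely here: outside the spherical and affine ranges, neither Garside technology nor the Euclidean geometry of parabolic subgroups of $[1, w]$ is directly available, and one would likely need to import new geometric input (perhaps from the $\mathrm{CAT}(0)$-type structure on the Davis complex, or from the Hilbert geometry on the projectivized Tits cone described by McMullen) to control the combinatorics of Hurwitz orbits inside $[1, w]$ and to verify contractibility of $\widetilde{K_w}$.
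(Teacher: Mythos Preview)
The statement you are addressing is a \emph{conjecture}, and the paper does not claim to prove it in general; it is a survey of the dual approach that records what is known (the spherical and affine cases) and poses a series of open questions for arbitrary Coxeter groups. So your proposal is at best a strategy outline rather than a proof, and indeed you yourself flag the final step as the main obstacle.

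That said, your outline scrambles the roles of the main ingredients compared with the paper's account of the affine proof. You describe the homotopy equivalence $K_w \simeq X_W$ as ``largely formal'' via an explicit cellular map $K_w \to X_W/W$, and you locate the discrete Morse argument inside the proof that $K_w$ is aspherical. The paper does the opposite. Asphericity of $K_W$ in the affine case is obtained via the McCammond--Sulway Garside completion $C \supseteq W$: one enlarges the generating set $R$ so that the interval $[1,w]$ in $C$ becomes a lattice, obtains a classifying space for the larger interval group, and deduces from this that $K_W$ is a classifying space for the dual Artin group. There is no discrete Morse function on the universal cover at this stage. The discrete Morse theory, on the other hand, is what drives the homotopy equivalence: one introduces a finite subcomplex $X_W' \subseteq K_W$ (the union of the spherical interval complexes $K_{W_T}$ over the finite standard parabolics $W_T$), proves $X_W' \simeq X_W$ for general $W$, and then uses an axial ordering of the reflections together with the resulting EL-shellability of $[1,w]$ to build a Morse matching collapsing $K_W$ onto $X_W'$. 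This deformation retraction is far from formal---it depends on the shellability of $[1,w]$ with respect to a carefully chosen edge-labeling---and it is one of the central open questions the paper raises beyond the affine case.
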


This conjecture dates back to the '60s when it was proved for the symmetric group by Fox and Neuwirth \cite{fox1962braid}.
It was then proved for most finite Coxeter groups by Brieskorn \cite{brieskorn1973groupes} and for all finite Coxeter groups by Deligne \cite{deligne1972immeubles}.
The above formulation for general Coxeter groups is attributed to Arnol'd, Pham, and Thom \cite{van1983homotopy}.
If $W$ is finite or affine, then the $K(\pi, 1)$ conjecture says that the complement of the complexification of a (locally finite) Euclidean reflection arrangement is a $K(\pi, 1)$ space.
See \cite{paris2012k} for a survey on this problem.

The primary motivation for the $K(\pi, 1)$ conjecture, besides its elegance, comes from the study of \emph{Artin groups}.
To every Coxeter group $W$ presented as in \eqref{eq:coxeter-presentation}, there is an associated Artin group defined as follows:
\begin{equation}
    \label{eq:artin-presentation}
 	G_W = \< S \mid \!\!\underbrace{stst\dotsm}_{m(s,t) \text{ terms}} \! = \! \underbrace{tsts\dotsm}_{m(s,t) \text{ terms}} \forall\, s,t \in S \text{ such that $m(s,t) \neq \infty$} \>.
\end{equation}
The Artin group $G_W$ also arises as the fundamental group of the quotient space $Y_W = Y / W$ \cite{van1983homotopy, salvetti1994homotopy}.
The quotient $Y \to Y_W$ is a covering map, so the $K(\pi, 1)$ conjecture can be equivalently formulated by asking that $Y_W$ be a \emph{classifying space} for the Artin group $G_W$.
The space $Y_W$ is called the \emph{orbit configuration space} associated with $W$.
This name comes from the case of the symmetric group $W = \mathfrak{S}_n$, for which $Y_W$ is the space of configurations of $n$ (indistinguishable) points in $\R^2$ and $G_W$ is the \emph{braid group} on $n$ strands (see \Cref{fig:braids}).

\begin{figure}
    \begin{center}
	\definecolor{darkgreen}{RGB}{0,100,0}
	\begin{tikzpicture}[scale=1.9]
	\draw (-1, -1.5) -- (1, -1.5) -- (1.5, -2.5) -- (-0.5, -2.5) -- cycle;
	
	\coordinate (A) at (-0.3, 0.1);
	\coordinate (B) at (0.3, -0.1);
	\coordinate (C) at (0.8, 0.1);
	
	\coordinate (A1) at ($(A) + (0,-2)$);
	\coordinate (B1) at ($(B) + (0,-2)$);
	\coordinate (C1) at ($(C) + (0,-2)$);

	\newcommand{\braida}{(A) ($(A) + (0.1, -0.2)$) ($(A) + (0.8, -0.7)$) ($(A) + (0.8, -1.3)$) ($(A) + (0.1, -1.7)$) (A1)}
	
	\begin{scope}
	\clip (-1, -1) rectangle (1, 0.1);
	\draw [white, line width=1mm] plot [smooth] coordinates {\braida};
	\draw [red] plot [smooth] coordinates {\braida};
	\end{scope}
	
	\newcommand{\braidb}{(B) ($(B) + (-0.2, -0.5)$) ($(B) + (0.4, -1.3)$) ($(C) + (0, -1.8)$) (C1)}
	\draw [white, line width=1mm] plot [smooth] coordinates {\braidb};
	\draw [blue] plot [smooth] coordinates {\braidb};
	
	\newcommand{\braidc}{(C) ($(C) + (0, -0.3)$) ($(C) + (-0.5, -1.3)$) ($(B) + (0, -1.7)$) (B1)}
	\draw [white, line width=1mm] plot [smooth] coordinates {\braidc};
	\draw [darkgreen] plot [smooth] coordinates {\braidc};
	
	\begin{scope}
	\clip (-1, -2) rectangle (1, -1);
	\draw [white, line width=1mm] plot [smooth] coordinates {\braida};
	\draw [red] plot [smooth] coordinates {\braida};
	\end{scope}

	\begin{scope}[every node/.style={fill, circle, inner sep=0.9}]
	\node at (A) {};
	\node at (B) {};
	\node at (C) {};
	
	\node at (A1) {};
	\node at (B1) {};
	\node at (C1) {};
	\end{scope}
	\draw[white, line width=1mm] (-1, 0.5) -- (1, 0.5) -- (1.5, -0.5) -- (-0.5, -0.5) -- cycle;
	\draw (-1, 0.5) -- (1, 0.5) -- (1.5, -0.5) -- (-0.5, -0.5) -- cycle;
	
	\node at (1.3, 0.35) {$\R^2$};
	\node at (1.3, -1.65) {$\R^2$};
	
	\draw [->] (2, 0) -- (2, -2);
	\node at (2.3, 0) {\footnotesize $t=0$};
	\node at (2.3, -2) {\footnotesize $t=1$};
	
	\end{tikzpicture}
	\end{center}
	\caption{A loop in the orbit configuration space $Y_W$ for $W = \mathfrak{S}_3$. In this case, $Y_W$ is the space of configurations of $3$ points in $\R^2$. Elements of the fundamental group $G_{\mathfrak{S}_3} = \pi_1(Y_W)$ are homotopy classes of loops, also known as \emph{braids}.}
	\label{fig:braids}
\end{figure}
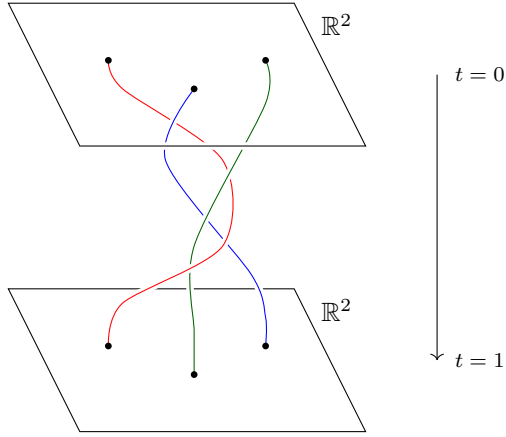

As shown by Salvetti \cite{salvetti1987topology, salvetti1994homotopy}, the orbit configuration space $Y_W$ has the homotopy type of a CW complex $X_W$ with $k$-cells indexed by the finite standard parabolic subgroups of rank $k$.
The CW complex $X_W$ is known as the \emph{Salvetti complex} of $W$ (see \Cref{fig:S3}).
The presentation \eqref{eq:artin-presentation} can be read off the $2$-skeleton of the Salvetti complex, thus providing a simple proof that $G_W \cong \pi_1(X_W)$.
Since $X_W$ is finite-dimensional, the $K(\pi, 1)$ conjecture for $W$ implies that the Artin group $G_W$ is torsion-free (a property that is not known in general).
In addition, the $K(\pi, 1)$ conjecture makes it possible to compute the homology and cohomology of an Artin group $G_W$ using the configuration space $Y_W$ or, equivalently, the Salvetti complex $X_W$, as done in several works already \cite{arnold1970some, fuks1970cohomology, cohen1973cohomology, cohen1978braid, vainshtein1978cohomologies, goryunov1978cohomology, goryunov1982cohomology, salvetti1994homotopy, de1996cohomology, charney1996finite, salvetti1997artin, de1999arithmetic, de1999stability, de2000cohomology, de2001arithmetic, callegaro2004integral, callegaro2005cohomology, callegaro2006homology, cohen2007homology, callegaro2008cohomology, callegaro2008cohomology2, callegaro2010k, salvetti2013combinatorial, paolini2018weighted, paolini2019local}.

Of course, having a CW model for $Y_W$ can be also useful to prove the $K(\pi, 1)$ conjecture.
This is particularly true if we want to approach the conjecture with combinatorial techniques, as in this case, we would rather work with CW complexes (indexed by combinatorial objects associated with $W$) than with ``raw'' topological spaces.
This combinatorial spirit is at the heart of the approach we discuss in this paper.

To date, the $K(\pi, 1)$ conjeture has been proved in the following cases: spherical Artin groups \cite{deligne1972immeubles} (see \Cref{sec:standard-approach}); affine Artin groups \cite{paolini2021proof} (see \Cref{sec:dual-approach} and the rest of this paper); $2$-dimensional and FC-type Artin groups \cite{charney1995k} (the proof is based on finding a CAT($0$) metric on the \emph{Deligne complex}, see also \cite{charney2016problems}).
It was previously proved for some subclasses of these Artin groups, with different methods: braid groups \cite{fox1962braid} and spherical Artin groups of type $C_n$, $D_n$, $G_2$, $I_2(m)$ \cite{brieskorn1972artin};\footnote{I was pointed out by Georges Neaime (private communication) that Brieskorn's construction is incorrect in the case $F_4$.}
affine Artin groups of type $\tilde A_n$, $\tilde C_n$ \cite{okonek1979dask} and of type $\tilde B_n$ \cite{callegaro2010k}; Artin groups of large type \cite{hendriks1985hyperplane}.

\subsection{The ``standard'' approach}
\label{sec:standard-approach}

The (right) Cayley graph of $W$ with respect to the generating set $S$ is the Hasse diagram of a partial order on $W$ known as the (right) \emph{weak Bruhat order} $\leq_S$:
\[
    u \leq_S v \quad \text{if and only if } l_S(v) = l_S(u) + l_S(u^{-1} v),
\]
where $l_S(u)$ is the length of $u$ with respect to $S$.
Choosing ``left'' instead of ``right'' (and replacing $u^{-1}v$ with $vu^{-1}$ in the definition of $\leq_S$) does not have an impact, since the resulting partial order $\leq_S'$ is isomorphic to $\leq_S$ via the map $u \mapsto u^{-1}$.
Notice that the edges of the Cayley graph are labeled by elements of $S$, so $(W, \leq_S)$ is an edge-labeled poset.

If $W$ is finite, then it has a unique longest element $\delta$ (where the length is measured by $l_S$).
Geometrically, $\delta$ is the element that sends the fundamental chamber $C_0$ to its opposite.
It is useful to think of the poset $(W, \leq_S)$ as the interval between $1$ and $\delta$ in the Cayley graph of $W$: every element $u \in W$ lies on at least one geodesic from $1$ to $\delta$, and the relation $u \leq_S v$ holds if and only if there is a geodesic from $1$ to $\delta$ which passes through $u$ and $v$ (in this order).
To emphasize this interval structure, we denote the poset $(W, \leq_S)$ by $[1, \delta]_S$.
For the symmetric group $\mathfrak{S}_3$, this is shown in \Cref{fig:weak-order}.

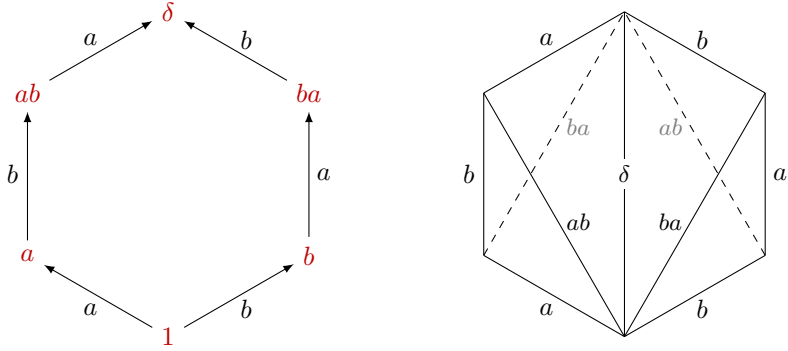
\begin{figure}
    \centering
    \begin{tikzpicture}[every path/.style={-latex}, scale=1.8]
	   	\begin{scope}[every node/.style={red!75!black}]
		   	\node (D0) at (30:1.2) {$ba$};
		   	\node (D1) at (90:1.2) {$\delta$};
		   	\node (D2) at (150:1.2) {$ab$};
		   	\node (D3) at (210:1.2) {$a$};
		   	\node (D4) at (270:1.2) {$1$};
		   	\node (D5) at (330:1.2) {$b$};
	   	\end{scope}
	   	
	   	\begin{scope}
		   	\node (b1) at (-60:1.15) {{$b$}};
		   	\node (a1) at (-120:1.15) {{$a$}};
		   	\node (b2) at (180:1.15) {{$b$}};
		   	\node (a2) at (0:1.15) {{$a$}};
		   	\node (b3) at (60:1.15) {{$b$}};
		   	\node (a3) at (120:1.15) {{$a$}};
	   	\end{scope}

	    \draw (D0) -- (D1);
	    \draw (D2) -- (D1);
	    \draw (D3) -- (D2);
	    \draw (D4) -- (D3);
	    \draw (D4) -- (D5);
	    \draw (D5) -- (D0);
    \end{tikzpicture}
    \qquad\qquad
    \begin{tikzpicture}[scale=1.8]
	    \coordinate (delta) at (90:1.2);
	    \coordinate (ab) at (150:1.2);
	    \coordinate (a) at (210:1.2);
	    \coordinate (1) at (270:1.2);
	    
	    \begin{scope}
		    \node (b1) at (-60:1.15) {{$b$}};
		    \node (a1) at (-120:1.15) {{$a$}};
		    \node (b2) at (180:1.15) {{$b$}};
		    \node (a2) at (0:1.15) {{$a$}};
		    \node (b3) at (60:1.15) {{$b$}};
		    \node (a3) at (120:1.15) {{$a$}};
	    \end{scope}
	    
	    \begin{scope}[every node/.append style = {font=\small}]
		    \draw (1) -- (a);
		    \draw (a) -- (ab);
		    \draw (ab) -- (delta);
		    \draw (1) -- (delta) node [midway, fill=white, inner sep=2.5] {{$\delta$}};
		    \draw (1) -- (ab) node [midway, pos=0.4, above] {\;\;\;{$ab$}};
		    \draw[dashed] (a) -- (delta) node [midway, pos=0.6, below, black!50] {\;\;\;$ba$};
	    \end{scope}
	    
	    \coordinate (ba) at (30:1.2);
	    \coordinate (deltabis) at (90:1.2);
	    \coordinate (1bis) at (270:1.2);
	    \coordinate (b) at (330:1.2);
	    
	    \begin{scope}[every node/.append style = {font=\small}]
		    \draw (1bis) -- (b);
		    \draw (b) -- (ba);
		    \draw (ba) -- (deltabis);
		
		    \draw (1bis) -- (ba) node [midway, pos=0.4, above] {{$ba$}\;\;\;};
		    \draw[dashed] (b) -- (deltabis) node [midway, pos=0.6, below, black!50] {$ab$\;\;\;};
	    \end{scope}
	    
	  	\node at (1) {\phantom{$1$}};
    \end{tikzpicture}
    \caption{\textbf{Left:} the interval $[1, \delta]_S$ for the symmetric group $\mathfrak{S}_3$. The edges are labeled by the simple reflections $a$ and $b$. The longest element is given by $\delta = aba = bab$.
    \textbf{Right:} the labeled order complex of $[1, \delta]_S$.
    Its quotient $K$ is a classifying space for the braid group $G_{\mathfrak{S}_3}$ and consists of the following simplices: the $0$-simplex $[\,]$; the five $1$-simplices $[a]$, $[b]$, $[ab]$, $[ba]$, and $[\delta]$; the six $2$-simplices $[a|b]$, $[b|a]$, $[a|ba]$, $[ab|a]$, $[ba|b]$, and $[b|ab]$; the two $3$-simplices $[a|b|a]$ and $[b|a|b]$.}
    \label{fig:weak-order}
\end{figure}

Crucially, the interval $[1, \delta]_S$ is a \emph{lattice}: every pair of elements has a unique minimal upper bound (a least common multiple) and a unique maximal lower bound (a greatest common divisor).
This was first shown by Deligne \cite{deligne1972immeubles}\footnote{Another proof of the lattice property was given later by Bj\"orner-Edelman-Ziegler \cite{bjorner1990hyperplane}. See also \cite[Section 3.2]{bjorner2006combinatorics}.} and used to prove the $K(\pi, 1)$ conjecture in the spherical case (actually, Deligne's proof works more generally for finite simplicial arrangements of linear hyperplanes in $\R^n$).
To prove the $K(\pi, 1)$ conjecture, Deligne showed that the universal cover of $Y_W$ is an increasing union of copies of the subspace determined by the \emph{positive paths}, which can be proved to be contractible using the lattice property.
It follows that the whole universal cover is itself contractible.
This idea goes back to Garside \cite{garside1969braid}.
See also \cite{delucchi2006topology, delucchi2009combinatorics, paris2012k, paolini2015thesis} for different reformulations of Deligne's proof.

One can also \emph{define} spherical Artin groups in terms of the interval $[1, \delta]_S$: $G_W$ is the group generated by the set $S$ and subject to all relations that identify any two words that can be read along maximal chains with the same initial and final point.
For example, in the symmetric group $\mathfrak{S}_3$ (\Cref{fig:weak-order}), the only relation is $aba = bab$ (obtained by reading the labels along the two geodesics from $1$ to $\delta$), so the Artin group is presented as $G_{\mathfrak{S}_3} = \<a, b \mid aba = bab\>$.
We say that $G_W$ is the \emph{interval group} associated with the labeled poset $[1, \delta]_S$.
This construction was generalized by Dehornoy and Paris, replacing $[1, \delta]_S$ with any labeled lattice $P$ satisfying some additional conditions.
They called \emph{Garside groups} the interval groups arising in this way \cite{dehornoy1999gaussian, dehornoy2003homology, charney2004bestvina, mccammond2005introduction}.
The case $P = [1, \delta]_S$ is known as the ``standard'' Garside structure on spherical Artin groups.

Elements in a Garside group admit a normal form $\delta^m y$, where $\delta$ is the top element of the defining lattice $P$, the exponent $m$ is an integer, and $y$ is a positive word in the generators.
In addition, a suitable quotient $K$ of the order complex of $P$ is a classifying space for the corresponding Garside group (see \Cref{fig:weak-order}).
More specifically, $K$ is obtained by identifying any two simplices $\{y_0 < y_1 < \dotsm < y_k\}$ and $\{z_0 < z_1 < \dotsm < z_k\}$ such that $y_i^{-1} y_{i+1} = z_i^{-1} z_{i+1}$ for all $i=0, \dotsc, k-1$.
Then a simplex of $K$ is uniquely determined by the sequence $(x_1, \dotsc, x_k) = (y_0^{-1} y_1, \dotsc, y_{k-1}^{-1} y_k)$ and we denote this simplex by $[x_1 | x_2 | \dotsb | x_k]$.
The complex $K$ was first introduced by Brady for braid groups \cite{brady2001partial} and then extended to spherical Artin groups by Brady-Watt \cite{brady2002k} and Bestvina \cite{bestvina1999non}, and to general Garside groups by Charney-Meier-Whittlesey \cite{charney2004bestvina}.
We call $K$ the \emph{interval complex} associated with $P$.
The lattice property of $P$ is crucial to obtain the normal form mentioned above and to prove that $K$ is a classifying space.

For spherical Artin groups $G_W$, one can show that $K$ is homotopy equivalent to the Salvetti complex $X_W$ and this is another way to prove the $K(\pi, 1)$ conjecture in the spherical case.
This homotopy equivalence is shown in \cite{delucchi2009combinatorics}.
Another complex that is homotopy equivalent to both $K$ and $X_W$ is the classifying space of the Artin monoid $G_W^+$, appearing in \cite{dobrinskaya2006configuration, ozornova2017discrete, paolini2017classifying}.

\subsection{The ``dual'' approach}
\label{sec:dual-approach}
The absence of the longest element in infinite Coxeter groups makes it impossible to extend the standard Garside structure to non-spherical Artin groups.
An alternative and promising direction to study Artin groups is based on a ``dual'' presentation of Coxeter groups, where the standard generating set $S$ is replaced by the set $R$ of all reflections (i.e., all conjugates of elements of $S$).
The associated length function $l_R$ is called the \emph{absolute length} and the induced partial order $\leq_R$ on $W$ is called the \emph{absolute order}.
Note that $R$ is infinite if $W$ itself is infinite, whereas $S$ is always finite.
The absolute order does not depend on the choice of left or right, thanks to the generating set $R$ being closed under conjugation.

\begin{figure}
    \begin{center}
    \begin{tikzpicture}[every path/.style={-latex}, scale=1.6]
	    \begin{scope}[every node/.style={red!75!black}]
		    \node (P) at (0,0) {$1$};
		    \node (L1) at (-1.5,1.2) {$a$};
		    \node (L2) at (0,1.2) {$b$};
		    \node (L3) at (1.5,1.2) {$c$};
		    \node (R) at (0, 2.4) {$w$};
	    \end{scope}
	    
	    \node (a1) at (-0.8, 0.45) {{$a$}};
	    \node (b1) at (0.12, 0.6) {{$b$}};
	    \node (c1) at (0.8, 0.45) {{$c$}};
	    \node (b2) at (-0.8, 1.95) {{$b$}};
	    \node (c2) at (0.12, 1.75) {{$c$}};
	    \node (a2) at (0.8, 1.95) {{$a$}};
	    
	    \draw (P) -- (L1);
	    \draw (P) -- (L2);
	    \draw (P) -- (L3);
	    \draw (L1) -- (R);
	    \draw (L2) -- (R);
	    \draw (L3) -- (R);
    \end{tikzpicture}
    \qquad\qquad
    \begin{tikzpicture}[scale=1.6]
	    \coordinate (w) at (0, 2.4);
	    \coordinate (a) at (-1.5,1.2);
	    \coordinate (b) at (0.5,1.2);
	    \coordinate (c) at (1.5,1.2);
	    \coordinate (1) at (0,0);
	    
	    \begin{scope}[every node/.append style = {font=\small}]
		    \draw (a) -- (w);
		    \draw (1) -- (w) node [midway, pos=0.5, fill=white, inner sep=2.5] {{$w$}};
		    \draw (1) -- (a);
		    \draw (1) -- (c);
		    \draw (c) -- (w);
		    \draw (1) -- (b) node [midway, pos=0.7, below] {\;\;{$b$}};
		    \draw (b) -- (w) node [midway, pos=0.3, above] {\;\;{$c$}};
	    \end{scope}
	    
 	    \node (a1) at (-0.8, 0.45) {{$a$}};
	    \node (c1) at (0.8, 0.45) {{$c$}};
	    \node (b2) at (-0.8, 1.95) {{$b$}};
	    \node (a2) at (0.8, 1.95) {{$a$}};

	    \node at (1) {\phantom{$1$}};
    \end{tikzpicture}
    \end{center}
    \caption{\textbf{Left:} the ``dual'' interval $[1, w]_R$ for the symmetric group $\mathfrak{S}_3$ with $w = ab$ as the chosen Coxeter element. The edges are labeled by the three reflections $a,b,c$ (note that the third reflection $c$ is equal to the longest element $\delta = aba = bab$, but this is just a coincidence).
    \textbf{Right:} the labeled order complex of $[1, w]_R$.
    Its quotient $K$ is a classifying space for the (dual) braid group $G_{\mathfrak{S}_3}$ and consists of the following simplices: the $0$-simplex $[\,]$; the four $1$-simplices $[a]$, $[b]$, $[c]$, and $[w]$; the three $2$-simplices $[a|b]$, $[b|c]$, and $[c|a]$.}
    \label{fig:ncp}
\end{figure}
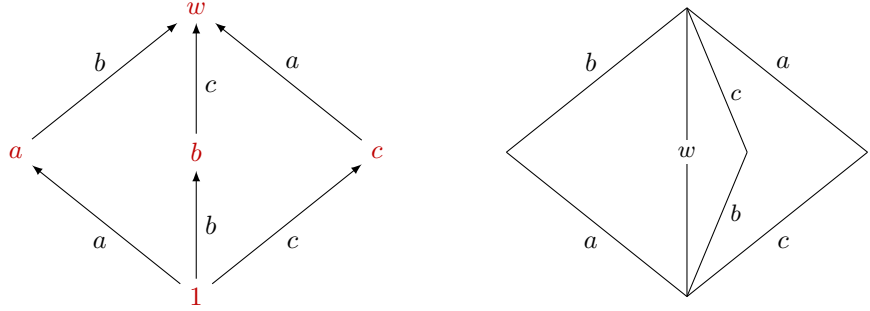

Let us restrict for now to finite Coxeter groups, where the absolute length of all $\leq_R$-maximal elements is equal to the rank of $W$.\footnote{This is not the case for infinite Coxeter groups, see for example \cite{lewis2019computing}.}
Among the maximal elements, a special role is played by \emph{Coxeter elements}, defined as $w = s_1s_2\dotsm s_n$ where $\{s_1, s_2, \dotsc, s_n\}$ is any set of simple reflections and the product is taken in any order.
If $W$ is finite, then all Coxeter elements $w\in W$ are conjugate and the intervals $[1,w]_R$ (inside the Cayley graph of $W$ with respect to the generating set $R$) are lattices (see \Cref{fig:ncp}).
This gives rise to several ``dual'' Garside structures, all isomorphic to each other.
Perhaps surprisingly, the interval group associated with any of these intervals $[1,w]_R$ is naturally isomorphic to the Artin group $G_W$.
Therefore, it is really the usual Artin group that we are studying and not some new Garside group.\footnote{This is not necessarily the case if $w$ is not a Coxeter element, see \cite{baumeister2021interval}.}

The duality between standard and dual presentations manifests itself in multiple numerical ``coincidences'', the most apparent being that $l_S(\delta) = |R|$ and $l_R(w) = |S|$.
In other words, the length of the standard interval $[1,\delta]_S$ is equal to the cardinality of $R$ (which is the set of atoms of $[1, w]_R$) and conversely the length of any dual interval $[1, w]_R$ is equal to the cardinality of $S$ (which is the set of atoms of $[1, \delta]_S$).
The dual presentation was first introduced by Birman-Ko-Lee \cite{birman1998new} for the braid group and then by Bessis \cite{bessis2003dual} for all finite Coxeter groups.

The intervals $[1,w]_R$ also exist in infinite Coxeter groups.
So it is natural to ask: Are they lattices, thus giving rise to Garside structures?\footnote{When the defining interval is infinite, the term \emph{quasi-Garside} is often used in place of \emph{Garside} (see for example \cite{dehornoy2015foundations}). In the present paper, we will not make this distinction.} Are the corresponding interval groups (called \emph{dual Artin groups}) isomorphic to the usual Artin groups? Can they help us find a solution to the word problem and the $K(\pi, 1)$ conjecture?

These questions are motivated by the success in understanding affine Artin groups by means of the dual approach. McCammond showed that $[1, w]_R$ fails to be a lattice in most affine cases \cite{mccammond2015dual}. However, McCammond and Sulway were able to prove that affine dual Artin groups are always isomorphic to the corresponding standard Artin groups, and can be included in larger Garside groups \cite{mccammond2017artin}.
In particular, this solves the word problem and shows that affine Artin groups are torsion-free.
More recently, Salvetti and the author proved the $K(\pi, 1)$ conjecture for affine Artin groups \cite{paolini2021proof}.
At a very high level, the proof consists of the following three components.
\begin{enumerate}
    \item Show that the interval complex $K_W$ associated with $[1,w]_R$ is a classifying space, despite the failure of the lattice property.
    \item Introduce a new subcomplex $X_W' \subseteq K_W$ with the same homotopy type as the orbit configuration space $Y_W$.
    \item Find a deformation retraction of $K_W$ onto $X_W'$.
\end{enumerate}
Together, these three steps imply that $Y_W$ is a classifying space and that the dual Artin group $\pi_1(K_W)$ is isomorphic to the standard Artin group $G_W = \pi_1(Y_W)$.
Therefore, they prove the $K(\pi, 1)$ conjecture and re-prove the isomorphism between standard and dual affine Artin groups.
In the rest of the present paper, we dive deeper into the different geometric, combinatorial, and topological aspects of this proof, with the hope that some of the key ideas can be generalized beyond the affine case.

\section{Coxeter elements}
\label{sec:coxeter-elements}

In this section, we introduce the main characters of the dual approach outlined in \Cref{sec:dual-approach}: Coxeter elements.
Let $W$ be a Coxeter group.
For any set $S = \{s_1, s_2, \dotsc, s_n\}$ of simple reflections (not necessarily the one used to define $W$), we say that the product $w = s_1s_2\dotsm s_n$ is a \emph{Coxeter element} of $W$ \cite{coxeter1934discrete, coxeter1951product, humphreys1992reflection}.
Any order of the simple reflections $s_1, \dotsc, s_n$ can be used and different orders can give rise to different Coxeter elements.
It is noted in \cite[Lemma 3.8]{ingalls2009noncrossing} and \cite[Lemma 5.1]{paolini2021proof} that the reflection length of any Coxeter element is equal to $n = |S|$.
In other words, it is not possible to write a Coxeter element as a product of less than $n$ reflections.

If $W$ is finite, then all Coxeter elements form a single conjugacy class and the order of any Coxeter element is $h = 2|R|/n$ ($h$ is called the \emph{Coxeter number} of $W$).
The eigenvalues of $w$ are of the form $\zeta^e$, where $\zeta = e^{2\pi i/h}$ and $e$ runs through the exponents of $W$ (listed in \Cref{table:degrees}).
$\R^n$ is the orthogonal sum of $w$-invariant subspaces $V_1, \dotsc, V_k$ of dimension $1$ or $2$: for each exponent $e < h/2$, there is a plane $V_i$ where $w$ acts as a rotation of $2\pi e/h$; for each exponent $e = h/2$ (there are at most $2$ of them), there is a line $V_i$ where $w$ acts as a reflection with respect to the origin.
There is a unique $w$-invariant plane $P$ corresponding to the exponent $e=1$, called the \emph{Coxeter plane}.
The Coxeter element $w$ acts on $P$ as a rotation of $2\pi/h$.
The Coxeter plane is also interesting thanks to the following simple observation.

\begin{lemma}
    Let $w$ be a Coxeter element in an irreducible finite Coxeter group $W$ acting on the unit sphere $\S^{n-1} \subseteq \R^n$ with $n = |S|$.
    Let $P \subseteq \R^n$ be the Coxeter plane.
    Then the circle $P \cap \S^{n-1}$ is the set of points $x \in \S^{n-1}$ for which the spherical distance $d(x, w(x))$ is minimized.
    \label{lemma:spherical-axis}
\end{lemma}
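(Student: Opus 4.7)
The plan is to reduce the problem to linear algebra on the spectral decomposition of $w$ recalled just before the lemma, and then use the fact that the exponent $e=1$ is the unique smallest exponent of $W$.

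First I would translate the statement into an optimization problem. Since $w$ is a linear isometry, $d(x, w(x)) = \arccos\langle x, w(x)\rangle$ for $x \in \S^{n-1}$, and $\arccos$ is strictly decreasing on $[-1,1]$. Hence minimizing $d(x, w(x))$ on $\S^{n-1}$ is equivalent to maximizing the quadratic form $q(x) = \langle x, w(x)\rangle$ subject to $\|x\| = 1$.

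Next I would use the $w$-invariant orthogonal decomposition $\R^n = V_1 \oplus \cdots \oplus V_k$ from the paragraph preceding the lemma. Write $x = x_1 + \cdots + x_k$ with $x_i \in V_i$, so $\|x\|^2 = \sum_i \|x_i\|^2$. Orthogonality of the $V_i$ gives
\[
q(x) \;=\; \sum_{i=1}^{k} \langle x_i, w(x_i)\rangle \;=\; \sum_{i=1}^{k} \|x_i\|^2 \cos\theta_i,
\]
where $\theta_i = 2\pi e_i/h$ if $V_i$ is a rotation plane corresponding to exponent $e_i < h/2$, and $\theta_i = \pi$ if $V_i$ is a $(-1)$-eigenline (exponent $h/2$). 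Setting $t_i = \|x_i\|^2 \geq 0$, the problem becomes: maximize $\sum t_i \cos\theta_i$ subject to $\sum t_i = 1$.

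This is a linear program on the simplex, so the maximum is $\max_i \cos\theta_i$, attained exactly on the faces $\{t_i = 0 \text{ for all } i \text{ with } \cos\theta_i < \max\}$. The crucial input is then that $\theta = 2\pi/h$ is the strictly smallest of the $\theta_i$ and occurs for a unique index, namely the one corresponding to the Coxeter plane $P$. This follows from the description of the exponents of a finite irreducible Coxeter group (see \Cref{table:degrees}): the smallest exponent is $e_1 = 1$, each exponent appears with multiplicity one, and the other $\theta_i$ are of the form $2\pi e_i/h$ with $e_i \geq 2$ or equal to $\pi$, all strictly larger than $2\pi/h$ (since $h \geq 2$). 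Hence $\cos\theta_i < \cos(2\pi/h)$ for $i \neq 1$, so the maximum is attained iff $t_1 = 1$ and all other $t_i = 0$, i.e., iff $x \in P \cap \S^{n-1}$.

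The only real subtlety, and the step I would double-check, is the simplicity of the exponent $1$ and the fact that no other exponent $e_i$ satisfies $\min(e_i, h-e_i) = 1$. Both are standard facts for finite irreducible Coxeter groups (equivalent to the Coxeter plane being uniquely defined), so the argument goes through cleanly.
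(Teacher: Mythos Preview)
Your proof is correct and follows essentially the same approach as the paper: both reduce to maximizing $\langle x, w(x)\rangle = \sum_i \|x_i\|^2 \cos\theta_i$ over the orthogonal $w$-invariant decomposition and observe that the maximum is attained precisely on the summand with the smallest angle $2\pi/h$, which is the Coxeter plane. You make the uniqueness of the exponent $1$ more explicit than the paper does, but otherwise the arguments coincide.
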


\begin{proof}
    Given $x \in \S^{n-1}$, write $x = v_1 + \dotsb + v_k$ with $v_i \in V_i$.
    Then
    \begin{align*}
        \cos \big(d(x, w(x)) \big) &= \< x, w(x) \> \\
        &= \< v_1, w(v_1) \> + \dotsb + \< v_k, w(v_k) \> \\
        &= \| v_1 \|^2 \cos\theta_1 + \dotsb + \| v_k \|^2 \cos\theta_k,
    \end{align*}
    where $\theta_i$ is equal to the rotation angle of $w|_{V_i}$ if $V_i$ is a plane and to $\pi$ if $V_i$ is a line.
    Since $\|v_1\|^2 + \dotsb + \|v_k\|^2 = \|x \|^2 = 1$, the previous expression is maximized if and only if all $v_i$ are $0$ except for the one corresponding to the largest $\cos \theta_i$.
    Such an $i$ is the one for which $\theta_i = 2\pi/h$ and $V_i = P$ is the Coxeter plane.
\end{proof}

\begin{table}
    \begin{center}
    \renewcommand{\arraystretch}{1.15}
    \begin{tabular}{|c|l|c|c|}
        \hline
        Type & Exponents & $h$ & $|W|$ \\
        \hline
        $A_n$ & $1,2,3,\dotsc, n$ & $n+1$ & $(n+1)!$ \\\hline
        $B_n$ & $1,3,5\dotsc, 2n-1$ & $2n$ & $2^n n!$ \\\hline
        $D_n$ & $1,3,5\dotsc, 2n-3, n-1$ & $2n-2$ & $2^{n-1} n!$ \\\hline
        $E_6$ & $1,4,5,7,8,11$ & $12$ & $2^7 3^4 5$ \\\hline
        $E_7$ & $1,5,7,9,11,13,17$ & $18$ & $2^{10} 3^4 5\ 7$ \\\hline
        $E_8$ & $1,7,11,13,17,19,23,29$ & $30$ & $2^{14} 3^5 5^2 7$\\\hline
        $F_4$ & $1,5,7,11$ & $12$ & $48$ \\\hline
        $G_2$ & $1,5$ & $6$ & $12$ \\\hline
        $H_3$ & $1,5,9$ & $10$ & $120$ \\\hline
        $H_4$ & $1,11,19,29$ & $30$ & $14400$ \\\hline
        $I_2(m)$ & $1,m-1$ & $m$ & $2m$ \\\hline
    \end{tabular}
    \bigskip
    \end{center}
    \caption{Exponents, Coxeter number, and cardinality of irreducible finite Coxeter groups (see \cite[Sections 2.11, 2.13, and 3.7]{humphreys1992reflection} and \cite[Appendix A1]{bjorner2006combinatorics}).}
    \label{table:degrees}
\end{table}

The fact that all Coxeter elements form a single conjugacy class holds more generally whenever the Coxeter graph is a tree.
This applies to all finite Coxeter groups (discussed above), but also to several infinite Coxeter groups, including all irreducible affine Coxeter groups except for the infinite family $\tilde A_n$.
In an arbitrary Coxeter group, however, Coxeter elements can form more than one conjugacy class and exhibit substantially different geometric properties and different spectra.

In affine Coxeter groups, Coxeter elements act as hyperbolic isometries on the Euclidean space \cite{mccammond2015dual}.
The set of points $x \in \R^n$ minimizing the Euclidean distance $d(x, w(x))$ is a line called the \emph{Coxeter axis}.
Analogously, in hyperbolic Coxeter groups, Coxeter elements act as hyperbolic isometries on the hyperbolic space $\H^n$ by \cite[Lemma 5.5]{mccammond2021factoring} and thus possess an axis (a hyperbolic line consisting of the points that are minimally moved).
In view of \Cref{lemma:spherical-axis}, the circle $P \cap \S^{n-1}$ can be regarded as the axis of the Coxeter element $w$ in a finite Coxeter group.
Therefore, in all three geometries (spherical, Euclidean, and hyperbolic), Coxeter elements $w$ have a well-defined axis, and the axis is a $w$-invariant geodesic.
Coxeter axes in triangle groups are shown in \Cref{fig:triangle-groups}.

\begin{question}
    Do all Coxeter elements (in arbitrary Coxeter groups) have an axis? Is the axis not contained in any reflection hyperplane?
    \label{question:axis}
\end{question}

If the previous question has a positive answer, then the axis $\ell$ of a Coxeter element $w$ passes through the interior of several chambers that we call \emph{axial chambers} (as in \cite{mccammond2015dual}).
The following question was positively answered for affine Coxeter groups in \cite[Theorem 8.10]{mccammond2015dual} and \cite[Theorem 3.8]{paolini2021proof}.

\begin{question}
    Is it true that a Coxeter element can be written as the product of the reflections with respect to the walls of any axial chamber (in some order)?
    \label{question:axial-factorization}
\end{question}

To answer this question, it is enough to show that a Coxeter element has length $\leq n$ with respect to the generating set $S$ consisting of the reflections with respect to some axial chamber.
Indeed, the length then needs to be exactly $n$ (because the reflection length is $n$) and a factorization into reflections in $S$ needs to use all of them because Coxeter elements are \emph{essential} (not contained in any proper parabolic subgroup) \cite{paris2007irreducible}.
McMullen's interpretation of Coxeter axes as billiard trajectories \cite{mcmullen2002coxeter} could help answering \Cref{question:axial-factorization}.

When the Coxeter graph is bipartite, one can construct so-called \emph{bipartite Coxeter elements}: these are obtained as $w = s_1s_2 \dotsm s_n$ where $S = \{s_1, \dotsc, s_k \} \cup \{s_{k+1}, \dotsc, s_n\}$ is a bipartition of any set $S$ of simple reflections.
In other words, $s_i$ and $s_j$ commute whenever $i, j \leq k$ or $i, j \geq k+1$.
If the Coxeter graph is a tree, then every Coxeter element is a bipartite Coxeter element with respect to some set of simple reflections.
This is especially useful to study Coxeter elements in finite  \cite{humphreys1992reflection, bessis2003dual} and affine \cite{mccammond2015dual} Coxeter groups.
In fact, \cite[Section 3]{paolini2021proof} shows a clear dichotomy between bipartite and non-bipartite affine Coxeter elements, the former having a significantly simpler geometrical behavior.
Bipartite Coxeter elements also minimize the spectral radius among all Coxeter elements in a fixed (hyperbolic or higher-rank) Coxeter group \cite{mcmullen2002coxeter}.
In \Cref{fig:triangle-groups}, the Coxeter element on the left is bipartite, whereas the other two are not.

\section{Factoring Coxeter elements: the noncrossing partition poset $[1,w]$}
\label{sec:ncp}

As explained in \Cref{sec:dual-approach}, the dual approach is based on understanding the poset $[1, w] = [1, w]_R$ which encodes the combinatorial data of all minimal factorizations of a Coxeter element $w$ into reflections.
For this, it can be useful to understand the minimal factorizations of $w$ into arbitrary reflections of the ambient bilinear form, not necessarily belonging to $W$.
These factorization posets are studied in \cite{brady2002partial} for spherical isometries, in \cite{brady2015factoring} for Euclidean isometries, and in \cite{mccammond2021factoring} for arbitrary non-degenerate quadratic spaces.
In this general setting, a recurring theme is that (under certain hypotheses) an isometry $u$ below $w$ is uniquely determined by $w$ and by its moved space $\Mov(u) := \im(u - \id)$.
In the spherical case, intervals are easy to describe: for any isometry $w$ in the orthogonal group $O(n)$, the interval $[1, w]$ in $O(n)$ is isomorphic to the poset of all subspaces of $\Mov(w)$ ordered by inclusion.

Let us go back to the setting of Coxeter groups.
If $w \in W$ is a Coxeter element, the interval $[1, w] = [1,w]_R$ is called a (generalized) \emph{noncrossing partition poset}.
The terminology comes from the case where $W$ is the symmetric group $\mathfrak{S}_{n+1}$, whose reflections are all transpositions $(i \ j)$.
Here Coxeter elements are the $(n+1)$-cycles, such as $(1 \ 2 \ \cdots \ n+1)$.
Then, the poset $[1, w]$ is naturally isomorphic to the classical lattice of noncrossing partitions of an $(n+1)$-gon (see for instance \cite{armstrong2009generalized}).

The maximal chains in $[1, w]$ correspond to the minimal factorizations of $w$ as a product of reflections, $w = r_1 r_2 \dotsm r_n$.
For example, in $\mathfrak{S}_3$ we have three reflections: $a=(1 \ 2)$, $b = (2 \ 3)$, and $c = (1 \ 3)$. The Coxeter element $w = ab = (1 \ 2 \ 3)$ has three minimal factorizations: $w = ab = bc = ca$.
For this case, the interval $[1, w]$ is depicted in \Cref{fig:ncp}.
There is a natural action of the braid group on the set of all minimal factorizations of a Coxeter element. This is called the \emph{Hurwitz action} and is defined as follows: the $i$-th generator $\sigma_i$ of the braid group sends the factorization $r_1r_2 \dotsm r_n$ to the factorization $r_1r_2\dotsm r_{i-1}r_{i+1} \tilde r_i r_{i+2} \dotsm r_n$ where $\tilde r_i = r_{i+1}r_ir_{i+1}$.
In words, $\sigma_i$ swaps the reflections $r_i$ and $r_{i+1}$ while conjugating $r_i$ by $r_{i+1}$.
Similarly, $\sigma_i^{-1}$ swaps $r_i$ and $r_{i+1}$ while conjugating $r_{i+1}$ by $r_i$.
It is known that the Hurwitz action is transitive on the set of all minimal factorizations of $w$ \cite{bessis2003dual,igusa2010exceptional,baumeister2014note}.

In the spherical case, the interval $[1,w]$ in $W$ (let us temporarily denote it by $[1,w]^W$) is an induced subposet of the interval $[1,w]$ in the whole orthogonal group $O(n)$.
However, \cite[Example 3.31]{paolini2021proof} shows that this is not true in the affine case, where there can be elements $u,v \in [1,w]^W$ such that $u \leq v$ in $[1,w]^L$ but $u \not\leq v$ in $[1,w]^W$. Here $[1,w]^{\Isom(\R^n)}$ is the interval inside the group $\Isom(\R^n)$ of all Euclidean isometries of $\R^n$.

If $W$ is finite, then Bessis proved that every element $u \in [1, w]$ is a Coxeter element for the parabolic subgroup generated by the reflections $\leq u$ \cite[Lemma 1.4.3 and Proposition 1.6.1]{bessis2003dual}.
This result was extended to \emph{crystallographic} Coxeter groups\footnote{In this context, $W$ is crystallographic if it is the Weyl group of a symmetrisable Kac-Moody Lie algebra. This happens if and only if the following two conditions are satisfied: (1) $m(s,t) \in \{ 2,3,4,6,\infty \}$ for all $s \neq t$; (2) in each circuit of the Coxeter graph not containing the edge label $\infty$, the number of edges labelled $4$ (resp.\ $6$) is even \cite[Theorem B.2]{hubery2016categorification}.} by Hubery-Krause \cite[Corollary 5.8]{hubery2016categorification}.
In the affine case, it was also proved in \cite[Theorem 3.22]{paolini2021proof}.
Note that, if $W$ is infinite, the subgroup generated by the reflections below $u$ is not necessarily a parabolic subgroup.

\begin{question}
    Is any element $u \in [1, w]$ a Coxeter element for the subgroup of $W$ generated by the reflections $\leq u$?
    \label{question:coxeter-element-subgroup}
\end{question}

The previous question is closely related to the following ones.

\begin{question}
    Is the Hurwitz action transitive on the minimal reflection factorizations of any element $u \in [1,w]$?
    \label{question:transitivity}
\end{question}

\begin{question}
    Let $u = r_1r_2\dotsm r_k$ be a minimal reflection factorization of an element $u \in [1,w]$.
    Does the subgroup $\<r_1, r_2, \dotsc, r_k\>$ only depend on $u$ (and not on the chosen factorization)?
    \label{question:factorization-subgroup}
\end{question}

\begin{lemma}
    The following implications hold: \Cref{question:coxeter-element-subgroup} $\Rightarrow$ \Cref{question:transitivity} $\Rightarrow$ \Cref{question:factorization-subgroup}.
\end{lemma}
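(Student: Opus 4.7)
The plan is to prove the two implications separately, using essentially nothing more than the definition of the Hurwitz moves together with the transitivity result already cited in the excerpt for Coxeter elements themselves.

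For the implication \Cref{question:transitivity} $\Rightarrow$ \Cref{question:factorization-subgroup}, I would just read off the Hurwitz move: $\sigma_i$ sends the pair $(r_i, r_{i+1})$ to $(r_{i+1}, r_{i+1} r_i r_{i+1})$, and $\sigma_i^{-1}$ does the symmetric thing. In both cases the new pair generates the same subgroup of $W$ as the old one (indeed $r_i = r_{i+1}\cdot(r_{i+1} r_i r_{i+1})\cdot r_{i+1}$), so the subgroup $\langle r_1, \dotsc, r_k\rangle$ is a Hurwitz-invariant of the factorization. Under Q2 the Hurwitz orbit of any reduced factorization of $u$ is the whole set of reduced reflection factorizations of $u$, and Q3 follows immediately.

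For \Cref{question:coxeter-element-subgroup} $\Rightarrow$ \Cref{question:transitivity}, my first step is a preliminary observation that holds in any Coxeter group, independently of Q1: every reflection $r_i$ appearing in a reduced reflection factorization $u = r_1 r_2 \dotsm r_k$ satisfies $r_i \leq u$. The trick is to apply the sequence $\sigma_{i-1}, \sigma_{i-2}, \dotsc, \sigma_1$ in order; a direct check using the definition of $\sigma_j$ shows that at each step only the factor currently to the left of $r_i$ gets conjugated, while $r_i$ itself migrates one position to the left unchanged. The resulting factorization of $u$ has $r_i$ as its first letter, so $r_i \leq u$ by the definition of the absolute order.

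Granting Q1, this preliminary observation implies that every reduced reflection factorization of $u$ in $W$ uses only reflections from $W_u := \langle r \in R : r \leq u\rangle$. By Dyer's theorem, $W_u$ is itself a Coxeter group whose reflection set is exactly $R \cap W_u$, so the set of reduced reflection factorizations of $u$ in $W$ coincides with the corresponding set inside $W_u$, and the Hurwitz action is defined identically in either ambient group. Since $u$ is a Coxeter element of $W_u$ by Q1, the Hurwitz-transitivity theorem for Coxeter elements cited in the excerpt, applied to $u$ inside $W_u$, gives Q2. The only place where I expect any care to be needed is in invoking that last theorem in the generality of the possibly infinite Coxeter group $W_u$; this is not a real obstacle as long as one cites a version of the theorem valid for arbitrary Coxeter groups, and every remaining step reduces to a one-line manipulation of Hurwitz moves.
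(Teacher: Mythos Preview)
Your proof is correct and follows essentially the same route as the paper's: Hurwitz moves preserve the generated subgroup (giving Q2 $\Rightarrow$ Q3), and Q1 reduces Q2 to the known Hurwitz transitivity for Coxeter elements. The paper's proof is a two-sentence sketch that leaves implicit exactly the points you spell out --- that every reflection in a reduced factorization of $u$ lies below $u$, and that (via Dyer) reduced reflection factorizations of $u$ in $W$ agree with those inside $W_u$ --- so your version is the same argument with the gaps filled in rather than a genuinely different approach.
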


\begin{proof}
    If \Cref{question:coxeter-element-subgroup} has a positive answer, then \Cref{question:transitivity} also does because of the transitivity of the Hurwitz action on Coxeter elements.
    After applying a Hurwitz move to a minimal reflection factorization $u = r_1r_2 \dotsm r_k$, the subgroup $\<r_1, r_2, \dotsc, r_k\>$ does not change.
    Therefore \Cref{question:transitivity} implies \Cref{question:factorization-subgroup}.
\end{proof}

\section{Combinatorics of $[1,w]$}
\label{sec:combinatorics}

Both the standard and the dual structure are particularly powerful to study spherical Artin groups, because the corresponding intervals $[1, \delta]_S$ and $[1, w] = [1, w]_R$ are lattices (and therefore give rise to Garside structures).
In the spherical case, $[1,w]$ was shown to be a lattice by Bessis with a case-by-case proof \cite{bessis2003dual} and then later by Brady and Watt with a case-free proof \cite{brady2008non}.
Digne showed that $[1, w]$ is a lattice in the affine cases $\tilde A_n$ (for certain choices of the Coxeter element $w$) and $\tilde C_n$ \cite{digne2006presentations, digne2012garside}.
It turns out that these and $\tilde G_2$ are the only affine cases where the lattice property holds, as proved by McCammond \cite{mccammond2015dual}.
It seems reasonable to expect that most noncrossing partition posets are not lattices, but a general characterization is not known.

\begin{question}
    For which Coxeter groups $W$ (and Coxeter elements $w$) the noncrossing partition poset $[1,w]$ is a lattice?
    \label{question:lattice}
\end{question}

There is another combinatorial property of $[1, w]$ which emerged as part of the proof of the $K(\pi, 1)$ conjecture in the affine case: lexicographic shellability \cite{bjorner1983lexicographically}.
Shellability is ubiquitous in the theories of Coxeter groups and of subspace arrangements, where several naturally arising posets turn out to be shellable \cite{bjorner1980shellable,dyer1993hecke,gottlieb1998cohomology,athanasiadis2007shellability,davidson2014lexicographic,delucchi2017shellability,paolini2020shellability,pagaria2021representations}.

EL-shellability of $[1, w]$ was an essential ingredient in \cite{paolini2021proof} towards showing that the interval complex $K_W$ deformation retracts onto the subcomplex $X_W'$ which, in turn, is homotopy equivalent to the Salvetti complex $X_W$.
Technically, the proof did not just use the existence of \emph{any} EL-labeling of $[1,w]$.
Rather, it used the fact that a certain family of total orderings $\prec$ of the reflections $R_0 = R \cap [1, w]$ makes the natural labeling of $[1,w]$ an EL-labeling.
In this setting, the EL-labeling property can be phrased as follows: for every $u \in [1, w]$, there is exactly one minimal reflection factorization $u = r_1r_2 \dotsm r_k$ that is $\prec$-increasing (i.e., $r_1 \prec r_2 \prec \dotsb \prec r_k$); furthermore, this factorization is lexicographically smallest among all factorizations of $[1,u]$.
Shellability of $[1,w]$ for finite Coxeter groups was proved by Athanasiadis-Brady-Watt \cite{athanasiadis2007shellability}.

The idea for the construction of suitable orderings $\prec$ of $R_0$ is geometric.
Suppose that Questions \ref{question:axis} and \ref{question:axial-factorization} have a positive answer. In particular, the Coxeter element $w$ has an axis $\ell$.
Fix an axial chamber $C_0$ and a point $p \in C_0 \cap \ell$.
When working in the contragradient representation of $W$, the axis becomes a two-dimensional plane $P$ (for finite $W$, this is the Coxeter plane), and the point $p$ becomes a line $\bar p \subseteq P$ through the origin.
Any reflection hyperplane $H$ intersects $P$ in a line through the origin (\Cref{question:axis} asks that no reflection hyperplane $H$ contains $P$).
The lines through the origin in $P$ have a natural cyclic ordering $\prec_c$ based on the orientation of the axis $\ell$.
The line $\bar p$ provides a way to make this cyclic ordering into a total ordering: given two lines through the origin $\lambda, \lambda'$, we say that $\lambda \prec \lambda'$ if $\bar p, \lambda, \lambda'$ appear in this order in $\prec_c$.
Then we can order reflections $r \in R_0$ based on the intersection between the reflection hyperplane $H$ and the Coxeter plane $P$.
See \Cref{fig:spherical-EL-labeling}.

\begin{figure}
    \begin{center}
	\makeatletter
	\define@key{z sphericalkeys}{radius}{\def\myradius{#1}}
	\define@key{z sphericalkeys}{theta}{\def\mytheta{#1}}
	\define@key{z sphericalkeys}{phi}{\def\myphi{#1}}
	\tikzdeclarecoordinatesystem{z spherical}{%
		\setkeys{z sphericalkeys}{#1}%
		\pgfpointxyz{\myradius*sin(\mytheta)*cos(\myphi)}{\myradius*sin(\mytheta)*sin(\myphi)}{\myradius*cos(\mytheta)}}
	\makeatother
	
	\begin{tikzpicture}[scale=2.5]
	\tdplotsetmaincoords{10}{20}
	\begin{scope}[tdplot_main_coords]

	\tdplotsetrotatedcoords{0}{90}{0}
	\begin{scope}[tdplot_rotated_coords]
	\newcommand{\beginphi}{96}
	\draw plot[variable=\x,domain=\beginphi:\beginphi+180,smooth,samples=60]  (z spherical cs:radius=1,theta=90,phi=\x);
	\end{scope}
	
	\tdplotsetrotatedcoords{90}{90}{0}
	\begin{scope}[tdplot_rotated_coords]
	\newcommand{\beginphi}{95}
	\draw plot[variable=\x,domain=\beginphi:\beginphi+180,smooth,samples=60]  (z spherical cs:radius=1,theta=90,phi=\x);
	\end{scope}
	
	\tdplotsetrotatedcoords{45}{54.7356103}{0}
	\begin{scope}[tdplot_rotated_coords]
	\newcommand{\beginphi}{100}
	\draw plot[variable=\x,domain=\beginphi:\beginphi+180,smooth,samples=60]  (z spherical cs:radius=1,theta=90,phi=\x);
	\end{scope}
	
	\tdplotsetrotatedcoords{-45}{54.7356103}{0}
	\begin{scope}[tdplot_rotated_coords]
	\newcommand{\beginphi}{100}
	\draw plot[variable=\x,domain=\beginphi:\beginphi+180,smooth,samples=60]  (z spherical cs:radius=1,theta=90,phi=\x);
	\end{scope}
	
	\tdplotsetrotatedcoords{45}{125.2643897}{0}
	\begin{scope}[tdplot_rotated_coords]
	\newcommand{\beginphi}{100}
	\draw plot[variable=\x,domain=\beginphi:\beginphi+180,smooth,samples=60]  (z spherical cs:radius=1,theta=90,phi=\x);
	\end{scope}
	
	\tdplotsetrotatedcoords{-45}{125.2643897}{0}
	\begin{scope}[tdplot_rotated_coords]
	\newcommand{\beginphi}{95}
	\draw plot[variable=\x,domain=\beginphi:\beginphi+180,smooth,samples=60]  (z spherical cs:radius=1,theta=90,phi=\x);
	\end{scope}
	
	\tdplotsetrotatedcoords{45}{90}{0}
	\begin{scope}[tdplot_rotated_coords]
	\newcommand{\beginphi}{95}
	\draw[dashed, orange] plot[variable=\x,domain=\beginphi:\beginphi+180,smooth,samples=60]  (z spherical cs:radius=1,theta=90,phi=\x);
	\end{scope}
	\end{scope}
	\draw[black!50, thick] (0,0,0) circle (1);
	
	\node at (0.35,0.11) {\small $a$};
	\node at (-0.2,-0.16) {\small $b$};
	\node at (0.37,-0.39) {\small $c$};
	\node at (-0.1,0.8) {\small $d$};
	\node at (0.62,0.35) {\small $e$};
	\node at (-0.6,-0.15) {\small $f$};
	
	\node[fill=black, circle, inner sep=0.03cm, label={[label distance=-0.05cm]0:$p$}] at (0.148, -0.15) {};
	\end{tikzpicture}
    \end{center}
	\caption{Procedure to define an axial ordering of the reflections in $\mathfrak{S}_4$. The Coxeter element is $w = abc$ (as in \Cref{fig:triangle-groups}) and its axis is the dashed line, oriented upwards. Starting from the point $p$ and moving along the axis, we encounter the reflections in the following order: $a \prec b \prec d \prec e \prec f \prec c$. Note that it is possible to swap $a$ and $b$ (they commute and fix the same point of the axis), as well as $e$ and $f$.
    }
	\label{fig:spherical-EL-labeling}
\end{figure}

The procedure we have described is ambiguous whenever there are two reflection hyperplanes that intersect $P$ in the same line.
For finite Coxeter groups, any resolution of ``ties'' works, and we get the reflection orderings used in \cite{athanasiadis2007shellability} to prove that $[1,w]$ is EL-shellable.
For affine Coxeter groups, a more sophisticated definition is needed to handle ties between so-called horizontal reflections.
Nevertheless, we hope that the general idea of \emph{axial orderings} can be extended to more general Coxeter groups.

\begin{question}
    Can axial orderings be defined for general Coxeter groups in such a way that the natural labeling of $[1,w]$ is an EL-labeling?
    \label{question:shellability}
\end{question}

A proof of EL-shellability using axial orderings is likely going to need the following compatibility property as an intermediate step.

\begin{question}
    Does an axial ordering for $W$ restrict to axial orderings for all subgroups generated by the reflections below $u \in [1, w]$?
\end{question}

\section{Dual Artin groups and classifying spaces}
\label{sec:classifying-spaces}

As usual, let $W$ be a Coxeter group and $w$ one of its Coxeter elements.
Recall from \Cref{sec:general-picture} that the interval complex $K_W$ of $[1,w]$ is a $\Delta$-complex (as in Hatcher's book \cite{hatcher}) with one $d$-dimensional simplex $[x_1|x_2|\dotsb|x_d]$ for every factorization $u = x_1x_2\dotsm x_d$ such that $l_R(u) = l_R(x_1) + l_R(x_2) + \dotsm + l_R(x_d)$ and $u \in [1, w]$.
In other words, simplices correspond to partial minimal factorizations of $w$.
The faces of $[x_1|x_2|\dotsb|x_d]$ are given by:
\begin{itemize}
    \setlength\itemsep{0.08cm}
    \item $[x_2|\dotsb|x_d]$;
    \item $[x_1|x_2|\dotsb|x_ix_{i+1}|\dotsb|x_d]$ for $i = 1, \dotsc, d-1$;
    \item $[x_1|x_2|\dotsb|x_{d-1}]$.
\end{itemize}
The fundamental group $W_w = \pi_1(K_W)$ is the dual Artin group associated with the Coxeter group $W$ and the Coxeter element $w$.

If $[1,w]$ is a lattice, then $W_w$ is a Garside group and $K_W$ is a classifying space for $W_w$.
Interestingly, it was shown in \cite[Theorem 6.6]{paolini2021proof} that $K_W$ is a classifying space for every affine Coxeter group $W$, even when the lattice property does not hold.
The proof makes use of a construction by McCammond-Sulway \cite{mccammond2017artin} of a crystallographic group $C \supseteq W$ where the interval $[1,w]_{R'}$ (with respect to an extended generating set $R' \supseteq R$) is a lattice.
Therefore, we naturally pose the following two questions.

\begin{question}
    For a general Coxeter group $W$ with a Coxeter element $w$, is it possible to extend the generating set $R$ to a larger set $R'$ of isometries which generate a discrete group $C \supseteq W$ so that (1) the interval $[1,w]_{R'}$ is a lattice; (2) the dual Artin group $W_w$ embeds into the interval group $C_w$?
    \label{question:completion}
\end{question}

\begin{question}
    Is the interval complex $K_W$ a classifying space?
\end{question}

If $K_W$ is a classifying space for the dual Artin group $W_w$, then one can hope to prove the $K(\pi, 1)$ conjecture as in the affine case, by constructing a homotopy equivalence between $K_W$ and the orbit configuration space $Y_W$ (or equivalently, the Salvetti complex $X_W$).
Regardless of whether $K_W$ is a classifying space, a homotopy equivalence $K_W \simeq Y_W$ implies that the dual Artin group $W_w$ is isomorphic to the standard Artin group $G_W$.
Note that there is a natural map $G_W \to W_w$ which sends the standard generators of $G_W$ to the same generators inside $W_w$, and it makes sense to expect this map to be an isomorphism.

\begin{question}
    Is the interval complex $K_W$ homotopy equivalent to the orbit configuration space $Y_W$ (or equivalently, to the Salvetti complex $X_W$)?
    \label{question:homotopy-equivalence}
\end{question}

\begin{question}
    Is any dual Artin group $W_w$ (naturally) isomorphic to the corresponding standard Artin group $G_W$?
    \label{question:isomorphism}
\end{question}

In the spherical case, Bessis proved that $[1,w]$ is always a lattice (\Cref{question:lattice}) and that $W_w$ is isomorphic to $G_W$ (\Cref{question:isomorphism}).\footnote{There is in fact a uniform proof of the isomorphism $W_w \cong G_W$ in the spherical case, as discussed in \cite{chapuy2021counting}.}
Then \Cref{question:homotopy-equivalence} was a consequence of the fact that $K_W$ and $Y_W$ are classifying spaces of isomorphic groups.
In the affine case, \Cref{question:isomorphism} was first settled by McCammond-Sulway \cite{mccammond2017artin} and then re-proved in \cite{paolini2021proof} by answering \Cref{question:homotopy-equivalence}.

In order to attempt a proof that $K_W \simeq X_W$, it is convenient to fix a totally ordered set of simple reflections $S = \{s_1, s_2, \dotsc, s_n\}$ such that $w = s_1 s_2 \dotsm s_n$.
Then, the cells of $X_W$ are indexed by the subsets $T \subseteq S$ such that the (standard) parabolic subgroup $W_T \subseteq W$ is finite.
With this setup, \cite[Section 5]{paolini2021proof} introduces a new CW complex $X_W' \simeq Y_W$ which is naturally included in $K_W$, defined as
\[
    X_W' = \bigcup_{\substack{\phantom{\frac{a}{b}}{T \subseteq S}\phantom{\frac{a}{b}} \\ \text{$W_T$ finite}}} K_{W_T}.
\]
If $W$ is finite, then $X_W' = K_W$ and nothing interesting happens.
In all other cases, $K_W$ has infinitely many cells, whereas $X_W'$ is always a finite subcomplex.
The affine case $\tilde A_2$ is described in \Cref{fig:dual-salvetti}.

\begin{figure}
    \begin{center}
    \begin{tikzpicture}[scale=1.5,
	extended line/.style={shorten >=-#1,shorten <=-#1}, extended line/.default=45cm]
	
	\clip (-2.23, -2.2) rectangle (3.09, 1.2);

	\fill[black!30] ($(0, 0)$) -- ($(0, -1)$) -- ($({0.5*sqrt(3)}, -0.5)$) -- cycle;

	\newcommand{\rows}{7}
	\begin{scope}[black!50]
	\foreach \row in {-\rows, ...,\rows} {
		\draw [extended line] ($\row*({0.5*sqrt(3)}, 0.5)$) -- ($(0,\rows)+\row*({0.5*sqrt(3)}, -0.5)$);
		\draw [extended line] ($\row*(0, 1)$) -- ($({\rows/2*sqrt(3)}, \rows/2)+\row*({-0.5*sqrt(3)}, 0.5)$);
		\draw [extended line] ($\row*(0, 1)$) -- ($({-\rows/2*sqrt(3)}, \rows/2)+\row*({0.5*sqrt(3)}, 0.5)$);
	}
	\end{scope}
	
	\begin{scope}[black, thick]
	\foreach \row in {-1,0} {
		\draw [extended line] ($\row*({0.5*sqrt(3)}, 0.5) + ({0.5*sqrt(3)}, 0.5)$) -- ($(0,\rows)+\row*({0.5*sqrt(3)}, -0.5) + ({0.5*sqrt(3)}, -0.5)$);
		\draw [extended line] ($\row*(0, 1)$) -- ($({\rows/2*sqrt(3)}, \rows/2)+\row*({-0.5*sqrt(3)}, 0.5)$);
		\draw [extended line] ($\row*(0, 1)$) -- ($({-\rows/2*sqrt(3)}, \rows/2)+\row*({0.5*sqrt(3)}, 0.5)$);
	}
	\end{scope}
	
	\draw [extended line, dashed, orange] ($({0.25*sqrt(3)}, 0)$) -- ($({0.25*sqrt(3)}, 1)$);
	
    \node at (0.57,-0.2) {$a$};
    \node at (-0.1,-0.5) {$b$};
    \node at (0.57,-0.8) {$c$};
    \node at (-0.52,-0.9) {$a'$};
    \node at (1,-0.95) {$b'$};
    \node at (-0.52,-0.12) {$c'$};
	\end{tikzpicture}
    \end{center}
	
    \[
    X_W' \, = \quad
    \begin{tikzpicture}[baseline={([yshift=-.5ex]current bounding box.center)}, scale=1.2]
	    \coordinate (w) at (0, 2.4);
	    \coordinate (a) at (-1.5,1.2);
	    \coordinate (b) at (0.5,1.2);
	    \coordinate (c) at (1.5,1.2);
	    \coordinate (1) at (0,0);
	    
	    \begin{scope}[every node/.append style = {font=\small}]
		    \draw (a) -- (w);
		    \draw (1) -- (w) node [midway, pos=0.5, fill=white, inner sep=2.5] {{$ab$}};
		    \draw (1) -- (a);
		    \draw (1) -- (c);
		    \draw (c) -- (w);
		    \draw (1) -- (b) node [midway, pos=0.7, below] {\;\;{$b$}};
		    \draw (b) -- (w) node [midway, pos=0.3, above] {\;\;\;{$c'$}};
	    \end{scope}
	    
 	    \node (a1) at (-0.8, 0.45) {{$a$}};
	    \node (c1) at (0.8, 0.45) {{$c'$}};
	    \node (b2) at (-0.8, 1.95) {{$b$}};
	    \node (a2) at (0.8, 1.95) {{$a$}};

	    \node at (1) {\phantom{$1$}};
    \end{tikzpicture}
    \quad\cup\quad
    \begin{tikzpicture}[baseline={([yshift=-.5ex]current bounding box.center)}, scale=1.2]
	    \coordinate (w) at (0, 2.4);
	    \coordinate (a) at (-1.5,1.2);
	    \coordinate (b) at (0.5,1.2);
	    \coordinate (c) at (1.5,1.2);
	    \coordinate (1) at (0,0);
	    
	    \begin{scope}[every node/.append style = {font=\small}]
		    \draw (a) -- (w);
		    \draw (1) -- (w) node [midway, pos=0.5, fill=white, inner sep=2.5] {{$ac$}};
		    \draw (1) -- (a);
		    \draw (1) -- (c);
		    \draw (c) -- (w);
		    \draw (1) -- (b) node [midway, pos=0.7, below] {\;\;{$c$}};
		    \draw (b) -- (w) node [midway, pos=0.3, above] {\;\;\;{$b'$}};
	    \end{scope}
	    
 	    \node (a1) at (-0.8, 0.45) {{$a$}};
	    \node (c1) at (0.8, 0.45) {{$b'$}};
	    \node (b2) at (-0.8, 1.95) {{$c$}};
	    \node (a2) at (0.8, 1.95) {{$a$}};
	    
	    \node at (1) {\phantom{$1$}};
    \end{tikzpicture}
    \quad\cup\quad
    \begin{tikzpicture}[baseline={([yshift=-.5ex]current bounding box.center)}, scale=1.2]
	    \coordinate (w) at (0, 2.4);
	    \coordinate (a) at (-1.5,1.2);
	    \coordinate (b) at (0.5,1.2);
	    \coordinate (c) at (1.5,1.2);
	    \coordinate (1) at (0,0);
	    
	    \begin{scope}[every node/.append style = {font=\small}]
		    \draw (a) -- (w);
		    \draw (1) -- (w) node [midway, pos=0.5, fill=white, inner sep=2.5] {{$bc$}};
		    \draw (1) -- (a);
		    \draw (1) -- (c);
		    \draw (c) -- (w);
		    \draw (1) -- (b) node [midway, pos=0.7, below] {\;\;{$c$}};
		    \draw (b) -- (w) node [midway, pos=0.3, above] {\;\;\;{$a'$}};
	    \end{scope}
	    
 	    \node (a1) at (-0.8, 0.45) {{$b$}};
	    \node (c1) at (0.8, 0.45) {{$a'$}};
	    \node (b2) at (-0.8, 1.95) {{$c$}};
	    \node (a2) at (0.8, 1.95) {{$b$}};

	    \node at (1) {\phantom{$1$}};
    \end{tikzpicture}
    \]
    \caption{\textbf{Top:} the affine reflection arrangement of type $\tilde A_2$.
    The chamber corresponding to the set of simple reflections $S = \{a,b,c\}$ is highlighted.
    The dashed line is the axis of the Coxeter element $w = abc$.
    \textbf{Bottom:} the complex $X_W'$ in the case $\tilde A_2$. It is the union of three copies of the dual interval complex of type $A_2$ (\Cref{fig:ncp}), one for each standard parabolic subgroup. Overall, $X_W'$ consists of the following simplices: the $0$-simplex $[\,]$; the nine $1$-simplices $[a]$, $[b]$, $[c]$, $[a']$, $[b']$, $[c']$, $[ab]$, $[ac]$, and $[bc]$; the nine $2$-simplices $[a|b]$, $[b|c']$, $[c'|a]$, $[a|c]$, $[c|b']$, $[b'|a]$, $[b|c]$, $[c|a']$, and $[a'|b]$.}
    \label{fig:dual-salvetti}
\end{figure}

Note that the definition of $X_W'$ requires to fix a Coxeter element $w_T \in W_T \cap [1, w]$ for every $T$, and this is done by multiplying the elements of $T$ in the same order as they appear in $S$.
Roughly speaking, $X_W'$ locally looks like $K_W$ but globally it has the structure of the Salvetti complex $X_W$.
This, together with the known fact that $K_{W_T} \simeq X_{W_T}$ for finite $W_T$, implies that the newly defined $X_W'$ is indeed homotopy equivalent to the Salvetti complex $X_W$.
Note that $X_W'$ is defined and proved to be homotopy equivalent to $X_W$ (and thus to $Y_W$) for general Coxeter groups.
Note also that $X_W'$ depends on the choice of $S$ and not only on the Coxeter element $w$ (as opposed to $K_W$, which depends on $w$ but not on $S$).

The complexes $X_W'$ provide a more direct link between $K_W$ and $Y_W$.
With them, we can strengthen \Cref{question:homotopy-equivalence} while possibly coming closer to its proof.

\begin{question}
    Does the interval complex $K_W$ deformation retract onto any (or all) of its subcomplexes $X_W'$?
\end{question}

In the affine case, this is proved within the framework of discrete Morse theory \cite{forman1998morse, forman2002user}.
The set of simple reflections $S$ used to construct $X_W'$ consists of the reflections with respect to the walls of an axial chamber (see \Cref{question:axial-factorization}).
The ``discrete Morse vector field'' (a.k.a.\ the Morse matching) is constructed in two stages.
First, all but a finite number of the simplices in $K_W \setminus X_W'$ of the form $[x_1|x_2|\dotsb |x_d]$ with $x_1x_2 \dotsm x_d = w$ are matched with either $[x_1|x_2|\dotsb |x_{d-1}]$ or $[x_2|x_3|\dotsb|x_d]$.
This is a natural way to collapse a large number of simplices of $K_W$ and could potentially be useful in other cases.
Second, the (finitely many) remaining simplices of $K_W \setminus X_W'$ are collapsed by using an axial ordering $\prec$ of $R_0$ and the EL-shellability property (see \Cref{question:shellability}).
Roughly speaking, a simplex $\sigma = [x_1|x_2|\dotsb |x_d]$ is matched with the simplex $\tau$ computed through the following procedure:
\begin{enumerate}[1.]
    \item let $i=1$;
    \item if $l(x_i)>1$, then let $\tau$ be the simplex obtained from $\sigma$ by replacing $x_i$ with $r | rx_i$ where $r$ is the $\prec$-smallest reflection below $x_i$;
    \item if $x_i$ is a reflection $\prec$-smaller than all reflections below $x_{i+1}$, then let $\tau$ be the simplex obtained from $\sigma$ by replacing $x_i | x_{i+1}$ with $x_ix_{i+1}$;
    \item otherwise, increase $i$ by $1$ and repeat from step 2.
\end{enumerate}

The uniqueness of $\prec$-increasing factorizations, granted by the EL-shellability property, ensures that the previous procedure actually defines an involution (if $\sigma$ is matched with $\tau$, then $\tau$ is matched with $\sigma$).
For example, in the case $\tilde A_2$ (with the notation of \Cref{fig:dual-salvetti}), the simplex $[w] = [abc]$ is matched with $[a|bc]$ because $a$ is the $\prec$-smallest reflection below $w$.
Conversely, $[a|bc]$ is matched with $[abc] = [w]$ because $a$ is $\prec$-smaller than all reflections below $bc$ (these reflections are $b$, $c$, $a'$).
We also have that $[ab|c]$ is matched with $[a|b|c]$.
This procedure could be used more generally, provided that the EL-shellability property holds.

The lattice property for $[1,w]$ (\Cref{question:lattice}) implies that the dual Artin group $W_w$ is a Garside group and thus it has a solvable word problem, provided that one can effectively compute meets and joins in $[1,w]$.
Together with a constructive proof of the isomorphism $W_w \cong G_W$ (\Cref{question:isomorphism}), this implies that the word problem for the Artin group $G_W$ is also solvable.
When the lattice property does not hold, then a ``completion'' $C_w \supseteq W_w$ as in \cite{mccammond2017artin} (\Cref{question:completion}) could be used to solve the word problem for $W_w$ by leveraging a solution to the word problem for the Garside group $C_w$.

\begin{figure}[t]
    \begin{center}
    \begin{tikzpicture}
        \node[draw] (41) at (-1,0) {Q4.1: Lattice property};
        \node[draw] (42) at (4,0) {Q4.2: Shellability};
        \node[draw, text width=2.7cm, align=center] (43) at (8,0) {Q4.3: Restriction\\of axial orderings};
      
        \node[draw, text width=3.5cm, align=center] (51) at (-1,-2) {Q5.1: Garside\\completion $C_w \supseteq W_w$};

        \node[draw, text width=2.65cm, align=center] (52) at (4,-2) {Q5.2: $K_W$\\classifying space};

        \node[draw] (55) at (8, -2) {Q5.5: $K_W \searrow X_W'$};
        \node[draw] (53) at (8, -3.5) {Q5.3: $K_W \simeq Y_W$};
        \node[draw] (54) at (8, -5.5) {Q5.4: $W_w \cong G_W$};

        \node[draw, very thick] (conjecture) at (1.5, -4.5) {$K(\pi, 1)$ conjecture};
        \node[draw, very thick] (wordproblem) at (1.5, -6.5) {Word problem for $G_W$};
        
        \draw[-Latex] (41) to (51);
        \draw[-Latex] (41) to[in=90, out=270, looseness=0.5] (52);
        \draw[-Latex, dashed] (51) to (52);

        \draw[-Latex, dashed] (43) to (42);
        \draw[-Latex, dashed] (42) to[in=90, out=270, looseness=0.5] (55);
        \draw[-Latex] (55) to (53);
        \draw[-Latex] (53) to (54);
        
        \node[inner sep=0] (A) at (3.4, -3.5) {};
        \draw (52) to[in=0, out=270, looseness=0.5] (A.center);
        \draw (53) to[in=0, out=180, looseness=0.5] (A.center);
        \draw[-Latex] (A.center) to[in=90, out=180, looseness=1] (conjecture);

        \node[inner sep=0] (B) at (1.5, -5.9) {};
        \draw (54) to[in=90, out=180, looseness=0.3] (B.center);
        \draw (51) -- (-1, -4.8) to[in=90, out=270, looseness=0.7] (B.center);
        \draw[-Latex] (B.center) -> (wordproblem);

    \end{tikzpicture}
    \end{center}
    \caption{Diagram of the questions from \Cref{sec:combinatorics,sec:classifying-spaces}.
    Solid arrows
    mean ``implies'', whereas dashed arrows mean ``could be helpful for''.
    }
    \label{fig:diagram}
\end{figure}

\Cref{fig:diagram} summarizes all questions from this and the previous section, as well as the implications between them and with the $K(\pi, 1)$ conjecture and the word problem.

\section{Beyond spherical and affine cases}

The questions posed in this paper are answered for spherical and affine Coxeter/Artin groups but mostly remain mysterious beyond those cases.
Bessis proved that $[1,w]$ is a lattice if $W$ is a \emph{universal Coxeter group} (all the labels in the Coxeter graph are $\infty$), thus positively answering \Cref{question:lattice} in this case \cite{bessis2006dual}.
In an upcoming work with Emanuele Delucchi and Mario Salvetti, we are going to completely address Coxeter groups of rank $3$ (i.e., with $3$ generators):

\begin{theorem}[Delucchi-Paolini-Salvetti \cite{delucchi2022dual}]
    All questions in this paper have a positive answer for Coxeter groups of rank $3$.
\end{theorem}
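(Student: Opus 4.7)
The plan is to exploit the two-dimensional nature of rank $3$ Coxeter geometry: in every irreducible rank $3$ case, the contragredient representation descends to a faithful isometric action on a $2$-dimensional surface --- the sphere $\S^2$ (spherical), the Euclidean plane $\R^2$ (affine), or the hyperbolic plane $\H^2$ (Lorentzian). Each reflection hyperplane cuts this surface in a single geodesic, and every Coxeter element $w$ acts with a well-defined axis $\ell$ which is itself a geodesic. This collapses the problem to a planar combinatorics that addresses all three cases uniformly.

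I would first settle \Cref{question:axis} and \Cref{question:axial-factorization}. The existence of the axis is \Cref{lemma:spherical-axis} in the spherical case, a result of McCammond \cite{mccammond2015dual} in the affine case, and a consequence of \cite[Lemma 5.5]{mccammond2021factoring} in the hyperbolic case; in none of these geometries can $\ell$ lie in a reflection hyperplane, since that would force $w$ to commute with a reflection, contradicting the essentiality of Coxeter elements \cite{paris2007irreducible}. \Cref{question:axial-factorization} then follows from the length count sketched in \Cref{sec:coxeter-elements}: the three reflections through the walls of an axial chamber generate a subgroup containing $w$, and reflection length equal to $3$ forces a factorization that uses all of them.

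Questions \ref{question:coxeter-element-subgroup}--\ref{question:factorization-subgroup} reduce by the lemma of \Cref{sec:ncp} to \Cref{question:coxeter-element-subgroup}, whose only non-trivial instance in rank $3$ is an element $u \in [1, w]$ of reflection length $2$: such a $u$ acts as a rotation of the $2$-dimensional geometry, the reflections below $u$ are precisely those whose fixed geodesics pass through its fixed point, and they manifestly generate a dihedral parabolic of which $u$ is a Coxeter element. The main obstacle is the lattice property \Cref{question:lattice}, particularly for hyperbolic triangle groups and the universal rank $3$ case. In rank $3$ the lattice property reduces to showing that any two atoms $r_1, r_2 \le w$ admit a unique least upper bound in $[1,w]$; this bound is either $w$ itself or a rank $2$ element whose fixed point lies on both fixed geodesics of $r_1$ and $r_2$. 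I would classify the possible configurations of $(r_1, r_2, w)$ through the geometry of $\ell$: reflections below $w$ correspond bijectively to a controlled family of geodesics meeting $\ell$, and a combinatorial analysis of their intersection points in the surface produces at most one valid candidate join. The infinitude of $[1,w]$ in the hyperbolic and universal cases makes this the most delicate step, but the absence of the McCammond-type obstructions seen in higher-rank affine Coxeter groups --- where the ambient geometry has dimension $\geq 3$ --- gives reason for optimism.

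Once the lattice property is in hand, the remaining items fall into place. An axial ordering $\prec$ of $R_0$ is obtained by parametrizing $\ell$ and ordering each reflection by the point where its fixed geodesic meets $\ell$, with residual ties broken compatibly with the induced dihedral suborderings; the EL-labeling condition of \Cref{question:shellability} then reduces to checking uniqueness of $\prec$-increasing factorizations on intervals of rank $\le 3$, which follows directly from the classification above, and compatibility with dihedral subintervals answers the restriction question that follows it. \Cref{question:completion} becomes automatic (take $C = W$), and \Cref{question:homotopy-equivalence} and \Cref{question:isomorphism}, together with the deformation retraction $K_W \searrow X_W'$ discussed in \Cref{sec:classifying-spaces}, follow by importing the discrete Morse construction of \cite[Section~6]{paolini2021proof} with the rank $3$ axial ordering in place of the affine one. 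As noted, the hardest step will be the lattice property in the hyperbolic and universal cases, where global control over an infinite noncrossing partition poset must be extracted from the $2$-dimensional ambient geometry.
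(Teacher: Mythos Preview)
The paper does not prove this theorem. It is stated as an announcement of a result from a forthcoming paper (``Delucchi--Paolini--Salvetti, in preparation''), with no proof or sketch provided; the only accompanying remarks are that the $K(\pi,1)$ conjecture itself was already known in rank $3$ (spherical or $2$-dimensional), and that ``most of the other questions are not trivial and require a study of the geometry and combinatorics of the dual structure.'' There is therefore nothing in the present paper to compare your proposal against.

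Your outline is a reasonable strategy and is in the spirit of the paper's programme, but a few points deserve caution. First, the trichotomy ``spherical, affine, or hyperbolic'' does not exhaust the irreducible rank~$3$ Coxeter groups in the paper's terminology: hyperbolic (in the sense of \Cref{sec:coxeter-groups}) is strictly stronger than Lorentzian, and there are rank~$3$ groups with infinite labels (including the universal one) whose Tits form is degenerate or whose Tits cone does not sit inside the hyperboloid model in the required way; your reduction to a $2$-dimensional constant-curvature geometry needs a uniform justification that covers all of these. Second, the argument that the axis cannot lie in a reflection hyperplane (``that would force $w$ to commute with a reflection, contradicting essentiality'') is not quite right as stated: a reflection fixing the axis pointwise conjugates a translation along that axis to itself only in specific geometries, and in any case commuting with a single reflection does not by itself contradict essentiality. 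Finally, your treatment of the lattice property is the heart of the matter, as you acknowledge, and ``a combinatorial analysis of their intersection points \ldots\ produces at most one valid candidate join'' is precisely the step that requires real work; the paper gives no hint as to how this is carried out in the forthcoming article.
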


Examples of such groups are given in \Cref{fig:triangle-groups}.
Note that the $K(\pi, 1)$ conjecture was already known if $W$ has rank $3$ because $W$ is either spherical or $2$-dimensional.
However, most of the other questions are not trivial and require a study of the geometry and combinatorics of the dual structure.
Hopefully, this is going to be a useful step towards a better understanding of the dual approach for general Coxeter groups.

\bibliographystyle{amsalpha-abbr}
\bibliography{bibliography}

\end{document}